\title{Local regularity criteria in terms of one velocity component for  the Navier-Stokes equations}
\author{Kyungkeun Kang\thanks{
		Department of Mathematics, Yonsei University, 03722 Seoul, Republic of Korea. E-mail address: \url{kkang@yonsei.ac.kr}}
	\and
	Dinh Duong Nguyen\thanks{
		Department of Mathematics, Yonsei University, 03722 Seoul, Republic of Korea. E-mail address:  \url{nguyendinhduong.math.khtn@gmail.com}
	}
}
\numberwithin{equation}{section}
\DeclareMathOperator*{\esssup}{ess\,sup}
\newtheorem{theorem}{Theorem}[section]
\newtheorem{lemma}{Lemma}[section]
\newtheorem{definition}{Definition}[section]
\theoremstyle{definition}
\newtheorem{remark}{Remark}[section]
\newenvironment{AMS}{}{}
\newenvironment{keywords}{}{}
\begin{document}
	\maketitle
	
	\begin{abstract}
		This paper is devoted to presenting new interior regularity criteria in terms of one velocity component for weak solutions to the Navier-Stokes equations in three dimensions. It is shown that the velocity is regular near a point $z$ if its scaled $L^p_tL^q_x$-norm of some quantities related to the velocity field is finite and the scaled $L^p_tL^q_x$-norm of one velocity component is sufficiently small near $z$.
	\end{abstract}
	
	\begin{keywords}
		\textbf{Keywords:} Local energy solutions, Suitable weak solutions, Navier-Stokes equations, one velocity component, Ladyzhenskaya-Prodi-Serrin regularity condition.
	\end{keywords}
	
	\begin{AMS}
		\textbf{AMS Subject Classification Number:} 35Q30, 76D03, 76D05.
	\end{AMS}
	
	\allowdisplaybreaks 
	
	%
	\section{Introduction}
	%
	
	Let us consider the partial regularity of weak solutions for the three dimensional incompressible Navier-Stokes equations
	
	\begin{equation*} \label{NS} \tag{NS}
		\partial_t v - \Delta v + v \cdot \nabla v + \nabla \pi = 0,
		\quad \text{div}\, v = 0 \qquad \text{in } \Omega \times (0,T),
	\end{equation*}
	associated with the divergence-free initial data $v_0$, where $\Omega \subseteq \mathbb{R}^3$ and $T > 0$. The unknowns in \eqref{NS} are the velocity $v = (v_1,v_2,v_3) : \Omega \times (0,T) \to \mathbb{R}^3$ and the pressure $\pi : \Omega \times (0,T) \to \mathbb{R}$. It is well known that \eqref{NS} is invariant under the following natural scaling 
	$$
		v(x,t) \mapsto v_\lambda(x,t) = \lambda v(\lambda x, \lambda^2 t) \quad \text{and} \quad \pi(x,t) \mapsto \pi_\lambda(x,t) = \lambda^2 \pi(\lambda x, \lambda^2 t) \quad \lambda > 0,
	$$
	which plays an important role in the regularity theory.
	
	The existence of global weak solutions $v \in L^\infty(0,T; L^2(\Omega)) \cap L^2(0,T; H^1(\Omega))$ of the initial boundary value problem to \eqref{NS} was provided a long time ago by Leray \cite{L1934} ($\Omega = \mathbb{R}^3$) and Hopf \cite{H1951} ($\Omega$ bounded) for finite energy initial data. These solutions  satisfy the global energy inequality and now are known as Leray-Hopf weak solutions. However, the uniqueness of global weak solutions and the existence of global strong solutions are still outsanding open problems. 
	
	Various sufficient conditions have been provided to guarantee the regularity (and uniqueness) of weak solutions. One of the first and of the most well known of such conditions was indepently given by Prodi \cite{P1959}, Serrin \cite{S1962, S1963} and Ladyzhenskaya \cite{L1967} (also known as the (LPS) regularity condition, see also \cite{FJR1972}) which reads $v \in L^p(0,T;L^q(\mathbb{R}^3))$ where
	\begin{equation*} \tag{LPS} \label{LPS}
		\frac{2}{p} + \frac{3}{q} \leq 1 \qquad \text{for} \quad 3 \leq q \leq \infty.
	\end{equation*}	
	The case $(p,q) = (\infty,3)$ has been investigated later by Escauriaza, Seregin and \v{S}ver\'{a}k \cite{ISS2003} (also for interior regularity cases).
	
	For a solution $v$ to \eqref{NS} we say that $z = (x,t) \in \mathbb{R}^3 \times (0,\infty)$ is a regular point of $v$ if there exists $r > 0$ such that $v \in L^\infty(Q_r(z))$. Otherwise, it will be called a singular point. The singular set of $v$ contains all singular points. The partial regularity theory for \eqref{NS} has been initiated by Scheffer \cite{S1976,S1977} and futher improved and simplified by Caffarelli, Kohn and Nirenberg \cite{CKN1982}, Lin \cite{L1998}, Ladyzhenskaya and Seregin \cite{LS1999} and Vasseur \cite{V2007}. They showed that the one-dimensional parabolic Hausdorff measure of the set of possible interior singular points for suitable weak solutions (see Definition \ref{def_sws}) is zero (also for one-dimensional Hausdorff measure). Note that if a weak solution satisfies \eqref{LPS} then it is suitable (see \cite{GKT2006}). Interior and near boundary regularity criteria for suitable weak solutions were also provided in terms of some norms with scaled factors by Gustafson, Kang and Tsai in \cite{GKT2006,GKT2007}.
	
	Regularity criteria based only on one velocity component started in \cite{NP1999} in which the authors provided the regularity in $D \times (t_1,t_2) \subset \Omega \times (0,T)$ if one velocity component $v_3$ is bounded in $D$. Then, Neustupa, Novotn\'{y} and Penel in \cite{NNP2002} proved the regularity under a (LPS)-type condition $v_3\in L^p(0,T;L^q(\mathbb{R}^3))$ with
	\begin{equation*}
		\frac{2}{p} + \frac{3}{q} \leq \frac{1}{2} \qquad \text{for} \quad 4 \leq p < \infty, \quad 6 < q \leq \infty,
	\end{equation*}	
	This result was generalized by Kukavica and Ziane \cite{KZ2006} to
	\begin{equation*}
		\frac{2}{p} + \frac{3}{q} \leq \frac{5}{8} \qquad \text{for} \quad \frac{16}{5} \leq p < \infty, \quad \frac{24}{5} < q \leq \infty,
	\end{equation*}	
	and by Cao and Titi \cite{CT2008} to
	\begin{equation*}
		\frac{2}{p} + \frac{3}{q} < \frac{2}{3} + \frac{2}{3q} \qquad \text{for} \quad 3 < p \leq \infty, \quad  \frac{7}{2} < q \leq \infty,
	\end{equation*}	
	and also by Pokorn\'{y} and Zhou \cite{ZP2010} to
	\begin{equation*}
		\frac{2}{p} + \frac{3}{q} \leq \frac{3}{4} + \frac{1}{2q}  \qquad \text{for} \quad \frac{8}{3} \leq p < \infty, \quad \frac{10}{3} < q \leq \infty.
	\end{equation*}	
	One of the first results on blow-up criteria via one velocity component in the scaling-invariant space $L^p(0,T;\dot{H}^{\frac{1}{2} + \frac{2}{p}}(\mathbb{R}^3))$\footnote{That means $\|v_\lambda\|_{L^p \big(0,\lambda^{-2}T;\dot{H}^{\frac{1}{2} + \frac{2}{p}}(\mathbb{R}^3) \big)} = \|v\|_{L^p \big(0,T;\dot{H}^{\frac{1}{2} + \frac{2}{p}}(\mathbb{R}^3) \big)}$ for $v_\lambda(x,t) = \lambda v(\lambda x,\lambda^2 t)$ with $\lambda > 0$.} was given by Chemin and Zhang in \cite{CZ2016} for $p \in (4,6)$, then by Chemin, Zhang and Zhang in \cite{CZZ2017} for $p \in (4,\infty)$ and recently extended to $p \in [2,\infty)$ in \cite{HLLZ2019} by Han, Lei, Li and Zhao. It is equivalent that if $v_3 \in L^p (0,T; \dot{H}^{\frac{1}{2} + \frac{2}{p}}(\mathbb{R}^3))$ then $v$ is regular in $\mathbb{R}^3 \times (0,T]$ for $p \in [2,\infty)$. Note that the embedding from the homogeneous Sobolev space $\dot{H}^{\frac{1}{2} + \frac{2}{p}}(\mathbb{R}^3)$ to $L^{\frac{3p}{p-2}}(\mathbb{R}^3)$ is continuous for $p > 2$ (see for example \cite{BCD2011}), and for $2 < p < \infty$, $\big(p,\frac{3p}{p-2}\big)$ satisfies \eqref{LPS}.
	
	Very recently, two significant results have been established. Firstly, Chae and Wolf \cite{CW2021} proved the regularity under strictly \eqref{LPS}, namely $\frac{2}{p} + \frac{3}{q} < 1$ for $3 < q \leq \infty$. Meanwhile, Wang, Wu and Zhang \cite{WWZ2020} obtained \eqref{LPS} in the sense of Lorentz spaces $v_3 \in L^{p,1}(0,T;L^q(\mathbb{R}^3))$ with $2 < p < \infty$ and $3 < q < \infty$, where $L^{p,1}$ denotes Lorentz spaces with respect to time variable (note that $L^{p,1} \subsetneq L^p$ for all $p > 1$). On the other hand, in \cite{BK2019} Bae and the first author proved regularity of Type I solution satisfying \eqref{LPS} condition for one component of the velocity.
	
	Further disscusions on this direction can be found in \cite{N2018}, where the author posed a question: \textit{Whether $v_3 \in L^p(0,T;L^q(\mathbb{R}^3)$ with $(p,q)$ satisfying \eqref{LPS} is sufficiently for the regularity of solution $v$ in $\mathbb{R}^3 \times (t_1,t_2)$?} In general, it is still an open problem at the time writing this paper.
	
	The aim of this paper is to provide local regularity criteria in terms of one velocity component both for local energy and suitable weak solutions. We provide initial regularity and local criteria in Theorems \ref{theo1}, \ref{theo2} and \ref{theo3}, respectively. As a consequence, we provide a partially (the $L^p_tL^q_x$-norm of $v_3$ is needed to be small enough) positive answer to the above question for $q \in [3,\infty]$. For $z = (x,t) \in \mathbb{R}^3 \times (0,\infty)$ and $r > 0$ we denote 
	$$
		Q_r(z) := B_r(x) \times (t-r^2,t) \quad \text{and} \quad v_r(t) := \frac{1}{|B_r(x)|} \int_{B_r(x)} v(y,t) \,dy.
	$$
	 Before we state our main results, we recall a regularity criterion (see 
	 \cite[Theorem 5.1]{W2015}), which is given as follows:
	There exists an absolute constant $\epsilon_W > 0$ such that if $(v,p)$ be a suitable weak solution in $Q_r(x_0,t_0)$ satisfying
	\begin{equation}\label{Wolf-2015}
		r^{-2}\|v\|^3_{L^3(Q_r(x_0,t_0))} \leq \epsilon_W
	\end{equation}
	for some $r \in (0,1)$ then $v \in L^\infty(Q_{\frac{r}{2}}(x_0,t_0))$.

	Now we are ready to state main theorems. Our first result reads as follows.
	
	\begin{theorem} \label{theo1}
		Let $(v,\pi)$ be a local energy solution \textnormal{(}resp., suitable weak solution\textnormal{)} to \eqref{NS} in $\mathbb{R}^3 \times (0,T)$ associated with the divergence-free initial data $v_0 \in L^2_{\textnormal{uloc}}(\mathbb{R}^3)$ \textnormal{(}resp., in the sense that  $\lim_{t \to 0} \|v(t)-v_0\|_{L^2(\mathbb{R}^3)} = 0$\textnormal{)}. Let $x_0 \in \mathbb{R}^3$, $M > 0$ and $\epsilon_0 \in (0,\epsilon_W]$, where $\epsilon_W$ is the constant in \eqref{Wolf-2015}. There exists $\delta_0(M,\epsilon_0) \in (0,1)$ with the following property. For some $0 < r_0 = r_0(M,\epsilon_0) \leq \sqrt{T}$ if
		\begin{equation} \label{v0}
			\sup_{r \in (0,r_0]} r^{-1}  \|v_0\|^2_{L^2(B_r(x_0))} \leq M
		\end{equation}
		and 
		\begin{equation} \label{v3v0}
			\limsup_{r \to 0^+} r^{1-\frac{2}{p}-\frac{3}{q}} \|v_3\|_{L^p_tL^q_x(Q_r(x_0,r^2))} = 0 \qquad \text{for} \quad 1 \leq p,q \leq \infty
		\end{equation}
		hold, then 
		\begin{equation*}
			(\delta_0 r_0)^{-2} \|v\|^3_{L^3(Q_{\delta_0 r_0}(x_0,(\delta_0 r_0)^2))}  \leq \epsilon_0.
		\end{equation*}
		In particular, $v$ is regular in $\{x_0\} \times (0,(\delta_1 r_0)^2]$ for some $\delta_1(M,\epsilon_0) \in (0,1)$.
	\end{theorem}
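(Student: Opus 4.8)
The strategy is to verify the smallness hypothesis of the local regularity criterion \eqref{Wolf-2015} at a small scale based at $(x_0,0)$, and then transfer it to nearby times. Introduce the standard scaling-invariant quantities centred at $x_0$: for $\rho>0$,
\[
A(\rho)=\esssup_{0<t<\rho^2}\rho^{-1}\|v(t)\|_{L^2(B_\rho(x_0))}^2,\qquad E(\rho)=\rho^{-1}\int_{Q_\rho(x_0,\rho^2)}|\nabla v|^2,
\]
and $C(\rho)=\rho^{-2}\int_{Q_\rho(x_0,\rho^2)}|v|^3$, together with a scaled pressure quantity $D(\rho)$. From $\|v(t)\|_{L^3(B_\rho)}\lesssim\|v(t)\|_{L^2(B_\rho)}^{1/2}\big(\|\nabla v(t)\|_{L^2(B_\rho)}^{1/2}+\rho^{-1/2}\|v(t)\|_{L^2(B_\rho)}^{1/2}\big)$ and Hölder in $t$ one gets $C(\rho)\le C_\ast\big(A(\rho)+E(\rho)\big)^{3/2}$ with $C_\ast$ universal; hence it suffices to exhibit a scale $\rho_0\le r_0$ with $A(\rho_0)+E(\rho_0)\le(\epsilon_0/C_\ast)^{2/3}$, because then $C(\rho_0)\le\epsilon_0\le\epsilon_W$ and \eqref{Wolf-2015} applies on $Q_{\rho_0}(x_0,\rho_0^2)$.

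\textbf{Step 1 (a priori bounds from the initial data).} Using \eqref{v0} and the theory of local energy (resp.\ suitable weak) solutions, I would first secure, for a sufficiently small $r_0=r_0(M)\le\sqrt T$, a bound $\sup_{0<\rho\le r_0}\big(A(\rho)+E(\rho)+D(\rho)\big)\le C_0(M)$. This is the standard localized energy estimate near the initial time: one tests the (local) energy inequality with a cutoff adapted to $B_\rho(x_0)$, estimates the data term by \eqref{v0}, and closes the convective and pressure terms on a time interval $(0,T_\ast)$ whose length depends only on $M$, after which one sets $r_0:=\min\{\sqrt{T_\ast},\sqrt T\}$; for suitable weak solutions the hypothesis $\|v(t)-v_0\|_{L^2}\to0$ plays the role of \eqref{v0}. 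This yields only boundedness, not smallness — the smallness must come from \eqref{v3v0}.

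\textbf{Step 2 (one-component decay iteration).} The heart of the matter is a decay estimate of the form
\[
A(\theta\rho)+E(\theta\rho)\le C\theta^{2}\big(A(\rho)+E(\rho)+D(\rho)\big)+C\theta^{-\beta}\,\mathcal R_3(\rho)+C\theta^{-\beta}\,\mathcal R_J(\rho),\qquad \theta\in(0,\tfrac14],\ \rho\le r_0,
\]
derived from the local energy inequality on $Q_\rho(x_0,\rho^2)$ by reorganizing, via $\operatorname{div}v=0$, all quadratic interactions. Using $\partial_1v_1+\partial_2v_2=-\partial_3v_3$ one checks that the pressure source satisfies $-\Delta\pi=\partial_iv_j\partial_jv_i=2(\partial_3v_3)^2+2\textstyle\sum_{i\le2}\partial_3v_i\,\partial_iv_3-2\det\nabla_hv_h$, and an analogous bookkeeping applies to the convective term after one integration by parts. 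The remainder $\mathcal R_3(\rho)$ collects everything carrying a factor of $v_3$ or $\nabla v_3$; by Hölder and scale-invariant interpolation for $v_3$ — splitting into cases according to the sizes of $p$ and $q$ so as to cover the whole range $1\le p,q\le\infty$ in \eqref{v3v0} — it is dominated by $\big(\rho^{1-\frac2p-\frac3q}\|v_3\|_{L^p_tL^q_x(Q_\rho(x_0,\rho^2))}\big)^{\sigma}\big(1+A(\rho)+E(\rho)+D(\rho)\big)^{\tau}$, and by \eqref{v3v0} the first factor tends to $0$ as $\rho\to0$. The remainder $\mathcal R_J(\rho)$ comes solely from the horizontal Jacobian: writing $\det\nabla_hv_h=-\operatorname{div}_h(v_1\nabla_h^\perp v_2)$, the associated pressure piece is a singular integral of order $-1$ of $v_1\nabla_hv_2$, and its contribution is controlled by $C\big(A(\rho)+E(\rho)\big)^{\kappa}$ with $\kappa>1$ (and/or partially absorbed into $\int|\nabla v|^2$), which is precisely what lets the iteration contract. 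Finally I would fix $\theta=\theta(M)$ so small that the $C\theta^{2}$-term together with the $\mathcal R_J$-term forms a contraction, then shrink $r_0$ (using \eqref{v3v0}) so that $C\theta^{-\beta}\mathcal R_3(\rho)$ is as small as required for all $\rho\le r_0$; iterating over the scales $\theta^kr_0$ and interpolating between consecutive scales gives $A(\rho)+E(\rho)\le(\epsilon_0/C_\ast)^{2/3}$ for all $\rho\le\delta_0r_0$, where $\delta_0=\delta_0(M,\epsilon_0)\in(0,1)$. By Step 1's interpolation this is the asserted bound $(\delta_0r_0)^{-2}\|v\|_{L^3(Q_{\delta_0r_0}(x_0,(\delta_0r_0)^2))}^3\le\epsilon_0$; applying \eqref{Wolf-2015} on the cylinders $Q_{\sqrt s}(x_0,s)$ for $0<s\le(\delta_1r_0)^2$, with $\delta_1$ small enough that $\sqrt s<1$ and $\sqrt s\le\delta_0r_0$, then shows that every such $(x_0,s)$ is a regular point, i.e.\ $v$ is regular in $\{x_0\}\times(0,(\delta_1r_0)^2]$.

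\textbf{Main obstacle.} The delicate point is the pressure in Step 2. The contributions involving $v_3$ are genuinely small at small scales by \eqref{v3v0}; the real obstruction is the purely horizontal interaction, which a priori is only bounded (by $C_0(M)$). One must extract from the null/Jacobian structure of $\det\nabla_hv_h$ — compensated compactness, or the fact that $v_1\nabla_h^\perp v_2$ is ``one derivative better'' than a generic quadratic term — a power strictly above $1$ in $A(\rho)+E(\rho)$, or an absorption into the dissipation, so that the heat-type decay $C\theta^2$ prevails. Doing this with all exponents scaling-invariant and uniformly over the entire admissible range $1\le p,q\le\infty$ of \eqref{v3v0} is the technical core of the argument.
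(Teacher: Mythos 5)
Your Step 1 matches the paper in spirit (the paper obtains a uniform bound on the scaled energy, via the local energy inequality, the pressure decomposition \eqref{pressure_decomposition} resp.\ a Calder\'on--Zygmund splitting, and the Bradshaw--Tsai Gronwall lemma, for all scales $r\le r_0$). But Step 2 contains a genuine gap, and it sits exactly at the crux of the theorem. Your decay iteration needs the purely horizontal contribution $\mathcal R_J(\rho)$ to be either superlinear in $A(\rho)+E(\rho)$ \emph{and} small, or absorbable into the dissipation; you only assert this, invoking the null structure of $\det\nabla_h v_h$, and flag it yourself as the ``technical core.'' No such estimate is available from the energy norm alone, and even granting a bound $\mathcal R_J(\rho)\le C\big(A(\rho)+E(\rho)\big)^{\kappa}$ with $\kappa>1$, the iteration does not contract: superlinearity helps only if $A(\rho)+E(\rho)$ is already small, whereas the theorem allows an arbitrary constant $M$ in \eqref{v0}, so Step 1 delivers only boundedness by $C_0(M)$. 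Consequently the claimed conclusion $A(\rho)+E(\rho)\le(\epsilon_0/C_\ast)^{2/3}$ for $\rho\le\delta_0 r_0$ is not reachable along your route; if it were, one would essentially have resolved the one-component regularity problem by a direct Caccioppoli-type argument, which is precisely what is open.

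The paper circumvents this by using the smallness of $v_3$ only qualitatively, through a rescaling/compactness (contradiction) argument rather than a quantitative decay scheme: assuming $(\delta_0 r_0)^{-2}\|v\|^3_{L^3(Q_{\delta_0 r_0}(x_0,(\delta_0 r_0)^2))}>\epsilon_0$ for every $\delta_0$, one rescales $v_\delta(x,t)=\delta v(\delta x+x_0,\delta^2 t+\delta^2)$, uses the uniform bounds of Step 1 and Aubin--Lions to extract a limit $(\bar v,\bar\pi)$ which is a suitable weak solution on $Q_1(0,0)$, and uses \eqref{v3v0} to conclude $\bar v_3\equiv 0$. The limiting system with vanishing third component is known to have regular suitable weak solutions (Kukavica--Rusin--Ziane, \cite[Theorem 3.1]{KWM2017}), and the strong $L^3$ convergence of $v_\delta$ transfers the assumed lower bound $\epsilon_0$ to $\bar v$ at arbitrarily small scales, contradicting that regularity; Wolf's one-scale criterion \eqref{Wolf-2015} then yields the regularity statement. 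In other words, the structural gain from ``$v_3$ small'' is harvested at the level of the limit equation via an existing regularity theorem, not via a contraction estimate on the original solution; your proposal would need to supply exactly that missing quantitative estimate, which neither the paper nor the literature provides.
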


	For regularity criteria at a given point, an additional condition on the velocity is needed, in which the result is stated as follows.
	
	\begin{theorem} \label{theo2}
		Let $(v,\pi)$ be a local energy solution \textnormal{(}resp., suitable weak solution\textnormal{)} to \eqref{NS} in $\mathbb{R}^3 \times (0,\infty)$ associated with the divergence-free initial data $v_0 \in E^2$ \textnormal{(}resp., $v_0 \in L^2(\mathbb{R}^3)$\textnormal{)}. Let $z_0 = (x_0,t_0) \in \mathbb{R}^3 \times (0,\infty)$, $M > 0$ and $\epsilon_0 \in (0,\epsilon_W]$, where $\epsilon_W$ is the constant in \eqref{Wolf-2015}. There exists $\delta_0(M,\epsilon_0) \in (0,1)$ with the following property. For some $0 < r_0 = r_0(M,\epsilon_0,p_0,q_0) \leq \sqrt{t_0}$, assume that  one of following conditions holds
		\begin{align*} 
			&(i) \sup_{r \in (0,r_0]} r^{1-\frac{2}{p_0}-\frac{3}{q_0}}  \|v-v_r\|_{L^{p_0}_tL^{q_0}_x(Q_r(z_0))} &\leq M \qquad \text{for} \quad  \frac{2}{p_0} + \frac{3}{q_0} \in [1,2), \frac{3}{2} < q_0 \leq \infty;  
			\\
			&(ii) \sup_{r \in (0,r_0]} r^{2-\frac{2}{p_0}-\frac{3}{q_0}}  \|\nabla v\|_{L^{p_0}_tL^{q_0}_x(Q_r(z_0))} &\leq M \qquad \text{for} \quad  \frac{2}{p_0} + \frac{3}{q_0} \in [2,3), 1 < q_0 \leq \infty;
			\\
			&(iii) \sup_{r \in (0,r_0]} r^{2-\frac{2}{p_0}-\frac{3}{q_0}}  \|w\|_{L^{p_0}_tL^{q_0}_x(Q_r(z_0))} &\leq M \qquad \text{for} \quad  \frac{2}{p_0} + \frac{3}{q_0} \in [2,3), 1 < q_0 < \infty;
			\\
			&(iv) \sup_{r \in (0,r_0]} r^{3-\frac{2}{p_0}-\frac{3}{q_0}}  \|\nabla w\|_{L^{p_0}_tL^{q_0}_x(Q_r(z_0))} &\leq M \qquad \text{for} \quad  \frac{2}{p_0} + \frac{3}{q_0} \in [3,4), 1 \leq q_0 \leq \infty;
		\end{align*}
		where $w = \nabla \times  v$. In addition, suppose that  
		\begin{equation} \label{v3z0}
			\limsup_{r \to 0^+} r^{1-\frac{2}{p}-\frac{3}{q}} \|v_3\|_{L^p_tL^q_x(Q_r(z_0))} = 0 \qquad \text{for} \quad 1 \leq p,q \leq \infty.
		\end{equation}
		Then 
		\begin{equation} \label{W}
			(\delta_0 r_0)^{-2} \|v\|^3_{L^3(Q_{\delta_0 r_0}(z_0))} \leq \epsilon_0.
		\end{equation}
		In particular, $z_0$ is a regular point of $v$.
	\end{theorem}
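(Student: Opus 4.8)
The plan is to establish the scaled estimate \eqref{W}; granting it, the conclusion is immediate, since $\epsilon_0\le\epsilon_W$ and, after possibly shrinking $r_0$, $\delta_0 r_0\in(0,1)$, so that Wolf's criterion \eqref{Wolf-2015} applied on $Q_{\delta_0 r_0}(z_0)$ gives $v\in L^\infty(Q_{\delta_0 r_0/2}(z_0))$, that is, $z_0$ is a regular point of $v$. The strategy is to run the same iteration scheme used in the proof of Theorem~\ref{theo1}, with the hypotheses (i)--(iv) playing the role that the initial--data bound \eqref{v0} plays there.

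The first step is to see that each of (i)--(iv) forces a uniform control of the scaled energy, $\sup_{0<r\le r_0}\big(A(r)+E(r)\big)\le C(M)$, where $A(r)=\sup_{t_0-r^2<s<t_0}r^{-1}\|v(s)\|_{L^2(B_r(x_0))}^2$ and $E(r)=r^{-1}\|\nabla v\|_{L^2(Q_r(z_0))}^2$. For (ii) the choice $(p_0,q_0)=(2,2)$ already yields $E(r)\le M^2$, and $A(r)$ then follows from the local energy inequality; (iii) reduces to (ii) through the Biot--Savart estimate for divergence--free fields, $\|\nabla v\|_{L^{q_0}(B_{r/2})}\lesssim\|w\|_{L^{q_0}(B_r)}+r^{-1}\|v\|_{L^{q_0}(B_r)}$ for $1<q_0<\infty$ (from $\Delta v=-\nabla\times w$ and interior elliptic estimates); (iv) reduces to (iii) after one Sobolev--Poincar\'e step; and (i) is treated through the equation and parabolic maximal regularity after a Poincar\'e inequality. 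Throughout, the convective and pressure terms appearing in the local energy inequality are borderline in scaling and must be absorbed exactly as in the proof of Theorem~\ref{theo1}, with the pressure handled by a local decomposition in the spirit of \cite{W2015}.

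With $A(r)+E(r)\le C(M)$ in hand for all $r\le r_0$, the core is the one--component decay estimate. Splitting $v=(v_h,v_3)$, using the divergence--free relation $\partial_1 v_1+\partial_2 v_2=-\partial_3 v_3$, and exploiting the pressure equation $-\Delta\pi=\partial_i\partial_j(v_iv_j)$, every term of which can be reorganised so as to carry a factor or a derivative of $v_3$, one should obtain a contraction of the scaled $L^3$--norm: for a small $\theta\in(0,1)$ and all $r\le r_0$,
\begin{equation*}
(\theta r)^{-2}\|v\|_{L^3(Q_{\theta r}(z_0))}^3\le C(M)\,\theta^{\alpha}\,r^{-2}\|v\|_{L^3(Q_r(z_0))}^3+C(M)\,\Phi(r),
\end{equation*}
with $\alpha>0$ absolute and $\Phi(r)$ formed from the scaled $v_3$-quantity $r^{1-\frac2p-\frac3q}\|v_3\|_{L^p_tL^q_x(Q_r(z_0))}$ times powers of the bounded quantities $A(r),E(r)$, so that $\Phi(r)\to0$ as $r\to0^+$ by \eqref{v3z0}. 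Fixing $\theta=\theta(M)$ with $C(M)\theta^{\alpha}\le\frac12$, then $r_0$ small enough, via \eqref{v3z0}, that $\Phi\le\eta(M,\epsilon_0)$ on $(0,r_0]$, and iterating along the scales $r=\theta^k r_0$, summation of the geometric series gives $\limsup_{k\to\infty}(\theta^k r_0)^{-2}\|v\|_{L^3(Q_{\theta^k r_0}(z_0))}^3\le\epsilon_0$; choosing $k_0=k_0(M,\epsilon_0)$ and setting $\delta_0=\theta^{k_0}$ then yields \eqref{W}.

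The main obstacle is the decay estimate itself, and within it the control of the \emph{horizontal} part $v_h$: its oscillation can be read off from (i)/(ii), but its local mean $\frac{1}{|B_r|}\int_{B_r}v_h\,dy$ must be extracted from the smallness of the single component $v_3$ alone, which leaves no option but to exploit the divergence--free constraint and the pressure equation rather than an energy estimate on $v_h$, and to do so in a purely interior setting where $\pi$ is only defined up to the above decomposition. A subsidiary but necessary point, needed for the asserted dependence $\delta_0=\delta_0(M,\epsilon_0)$, is that every constant must be tracked to depend only on $M$, $\epsilon_0$ and the fixed exponents; this is possible because \eqref{v3z0} is a $\limsup=0$ statement, so $r_0$ may be chosen uniformly small and then $\Phi$ stays below threshold at \emph{every} scale $\le r_0$ simultaneously.
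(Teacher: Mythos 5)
There is a genuine gap at what you yourself identify as the core of the argument. Your plan rests on an unproved one-component decay (contraction) estimate
$(\theta r)^{-2}\|v\|^3_{L^3(Q_{\theta r}(z_0))}\le C(M)\theta^{\alpha}\,r^{-2}\|v\|^3_{L^3(Q_r(z_0))}+C(M)\Phi(r)$
with $\Phi$ built from the scaled norm of $v_3$ alone. No mechanism is given for it: the assertion that every term of $-\Delta\pi=\partial_i\partial_j(v_iv_j)$ ``can be reorganised so as to carry a factor or a derivative of $v_3$'' is not true as stated (purely horizontal terms such as $\partial_1\partial_2(v_1v_2)$ survive), and, as you concede, the local mean of the horizontal part cannot be read off from the smallness of $v_3$; this is exactly the obstruction that makes the one-component problem hard, and a quantitative estimate of the type you postulate is not available. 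The paper does not prove any such decay estimate. After establishing the uniform bound $E(r)\le C(M,\dots)$ for all $0<r\le r_0$ (your first step, which matches the paper in spirit), it argues by contradiction and compactness, exactly as in Theorem \ref{theo1}: assuming \eqref{W} fails for every $\delta_0$, one rescales, $v_\delta(x,t)=\delta v(\delta x+x_0,\delta^2t+t_0)$, $\pi_\delta(x,t)=\delta^2\pi_{r_0,x_0}(\delta x+x_0,\delta^2t+t_0)$, uses the uniform energy/pressure bounds and Aubin--Lions to pass to a limit $(\bar v,\bar\pi)$ which is a suitable weak solution on $Q_1(0,0)$; condition \eqref{v3z0} forces $\bar v_3\equiv 0$, so $\bar v$ solves the limiting system and is regular by \cite[Theorem 3.1]{KWM2017}, contradicting the persistence of the lower bound $\epsilon_0$ at all scales; Wolf's criterion \eqref{Wolf-2015} then gives regularity. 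In other words, the smallness \eqref{W} is obtained qualitatively, not through the quantitative Caccioppoli-type scheme you propose, and your proposal cannot be completed without supplying precisely the estimate that is missing.

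A secondary inaccuracy: your treatment of hypotheses (i)--(iv) is too coarse. Condition (ii) is assumed for one given pair $(p_0,q_0)$ with $\tfrac{2}{p_0}+\tfrac{3}{q_0}\in[2,3)$; when $\tfrac{2}{p_0}+\tfrac{3}{q_0}>\tfrac52$ you cannot reduce to $(p_0,q_0)=(2,2)$ by H\"older (the inequality goes the wrong way), so the claim ``$E(r)\le M^2$ directly'' fails in general. The paper instead interpolates the scaled mean-oscillation quantity $\bar A(\rho)=\rho^{-2}\|v-v_\rho\|^3_{L^3(Q_\rho(z_0))}$ against $E(\rho)$ with an exponent strictly below one (this is where the upper bounds $\tfrac{2}{p_0}+\tfrac{3}{q_0}<2$, $<3$, $<4$ enter), absorbs by Young, and iterates a decay inequality for $A(r)=r^{-2}\|v\|^3_{L^3(Q_r(z_0))}$; cases (iii) and (iv) are reduced to (ii) and (iii) by a Biot--Savart/harmonic-function argument and a Sobolev--Poincar\'e step, much as you indicate.
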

	
	In particular, due to the structure of the local energy inequality we are able to consider a one-scaled condition as in \cite[Theorems 2.1]{KWM2017,KWM2019}, which is related to the pressure for suitable weak solutions but nor for local energy solutions. More precisely, the result is stated as follows.
	
	\begin{theorem} \label{theo3}
		Let $(v,\pi)$ be a local energy solution \textnormal{(}resp., suitable weak solution\textnormal{)} to \eqref{NS} in $\mathbb{R}^3 \times (0,\infty)$ associated with the divergence-free initial data $v_0 \in E^2$ \textnormal{(}resp., $v_0 \in L^2(\mathbb{R}^3)$\textnormal{)}. Let $z_0 = (x_0,t_0) \in \mathbb{R}^3 \times (0,\infty)$, $M > 0$ and $\epsilon_0 \in (0,\epsilon_W]$, where $\epsilon_W$ is the constant in \eqref{Wolf-2015}. There exist $\epsilon(M,\epsilon_0)$ and $\delta_0(M,\epsilon_0) \in (0,1)$ with the following property. For some $0 < r_0 = r_0(M,\epsilon_0,p,q) \leq \sqrt{t_0}$ if
		\begin{align*} 
			\text{either} \quad r_0^{-2} \|v\|^3_{L^3(Q_{r_0}(z_0))}  &\leq M \qquad \text{for local energy solution}
			\\
			\text{or} \quad  
			r_0^{-2} \left(\|v\|^3_{L^3(Q_{r_0}(z_0))} + \|\pi\|^\frac{3}{2}_{L^\frac{3}{2}(Q_{r_0}(z_0))} \right) &\leq M \qquad \text{for suitable weak solution}
		\end{align*}
		and 
		\begin{equation} \label{v3r0}
			r^{1-\frac{2}{p}-\frac{3}{q}}_0 \|v_3\|_{L^p_tL^q_x(Q_{r_0}(z_0))} \leq \epsilon \qquad \text{for} \quad 1 \leq p,q \leq \infty
		\end{equation}
		hold then \eqref{W} follows and $z_0$ is a regular point of $v$. 
	\end{theorem}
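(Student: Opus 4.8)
The plan is to run a Caffarelli--Kohn--Nirenberg type iteration of scaled quantities on parabolic cylinders centred at $z_0$, in which the smallness of $v_3$ substitutes for the smallness of the scaled local energy in the usual $\varepsilon$-regularity theory, so that one may start from data at a single scale. After translating $z_0$ to the origin and applying the natural scaling $v\mapsto r_0\,v(r_0\,\cdot,r_0^{2}\,\cdot)$ one may take $r_0=1$. Put $A(\rho)=\rho^{-1}\esssup_{t\in(-\rho^{2},0)}\|v(t)\|_{L^2(B_\rho)}^{2}$, $E(\rho)=\rho^{-1}\|\nabla v\|_{L^2(Q_\rho)}^{2}$, $C(\rho)=\rho^{-2}\|v\|_{L^3(Q_\rho)}^{3}$ and $D(\rho)=\rho^{-2}\|\pi\|_{L^{3/2}(Q_\rho)}^{3/2}$; the hypotheses become $C(1)\le M$ (and $D(1)\le M$ for a suitable weak solution) together with $\|v_3\|_{L^p_tL^q_x(Q_1)}\le\epsilon$, hence $\|v_3\|_{L^p_tL^q_x(Q_\rho)}\le\epsilon$ for every $\rho\le1$ by monotonicity of the norm in the domain. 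For a local energy solution $D$ refers instead to the scaled norm of the Calder\'on--Zygmund part $\pi_0$ in Wolf's local pressure decomposition $\pi=\pi_0+\pi_h$ on $Q_1$ (so $D(1)\lesssim C(1)$, since $-\Delta\pi_0=\partial_i\partial_j(v_iv_j)$), while the spatially harmonic part $\pi_h$ enters later only through its subcritical spatial oscillation and its far-field contribution is negligible for small $r_0$. The goal is to produce $\delta_0(M,\epsilon_0)\in(0,1)$ with $C(\delta_0)\le\epsilon_0\le\epsilon_W$; then \eqref{Wolf-2015} gives $v\in L^\infty(Q_{\delta_0/2})$, which is \eqref{W} and the regularity of $z_0$. (We may assume $M\ge\epsilon_W$.)

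The heart of the argument is a decay lemma: there exist $\theta\in(0,\tfrac14]$, $\kappa>0$ and $c\ge1$ (depending on $M,p,q$) such that for every $\rho\le\tfrac12$,
\[
  A(\theta\rho)+E(\theta\rho)+C(\theta\rho)+D(\theta\rho)\le c\,\theta\,\bigl(C(\rho)+D(\rho)\bigr)+c\,\theta^{-3}\,\epsilon^{\kappa}\,\bigl(1+C(\rho)+D(\rho)\bigr)^{2}.
\]
Granting this, fix $\theta$ with $c\theta\le\tfrac14$ and then $\epsilon=\epsilon(M,\epsilon_0)$ so small that $c\,\theta^{-3}\epsilon^{\kappa}(1+2K_0)^{2}\le\min\{K_0,\tfrac14\epsilon_0\}$, where $C(\tfrac12)+D(\tfrac12)\le K_0=K_0(M)$. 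Starting from this bound, the lemma gives by induction $C(\tfrac12\theta^{k})+D(\tfrac12\theta^{k})\le K_0\,4^{-k}+\tfrac13\epsilon_0\le 2K_0$ for all $k$ — so it keeps applying, the scaled norm of $v_3$ at scale $\tfrac12\theta^{k}$ remaining $\lesssim\theta^{-k(\frac2p+\frac3q-1)_+}\epsilon$ and hence admissible for $k$ below a fixed threshold — and choosing $k_0(M,\epsilon_0)$ with $K_0\,4^{-k_0}\le\tfrac12\epsilon_0$ and setting $\delta_0=\tfrac12\theta^{k_0}$ gives $C(\delta_0)\le\epsilon_0$. To prove the lemma one tests the local energy inequality with a cut-off equal to $1$ on $Q_{\theta\rho}$ and supported in $Q_\rho$ — this is exactly the single-scale mechanism, since the right-hand side of the local energy inequality only involves scale-$\rho$ data — handling the pressure term after replacing $\pi$ by $\pi-\bar\pi_{B_\rho}$ via $\int v\cdot\nabla\phi=\int\operatorname{div}(\phi v)=0$ and using $-\Delta(\pi-\bar\pi)=\partial_i\partial_j(v_iv_j)$ together with the harmonicity of $\pi_h$; combining this with the Gagliardo--Nirenberg/Poincar\'e inequalities on balls, which furnish the decay exponents (e.g.\ $C(\theta\rho)\lesssim\theta^{3}A(\rho)^{3/2}+\theta^{-3/2}A(\rho)^{3/4}E(\rho)^{3/4}$); and estimating the convective term $\int\!\!\int|v|^2\,v\cdot\nabla\phi$ and the pressure source $\partial_i\partial_j(v_iv_j)$ by isolating every contribution carrying a factor $v_3$ and bounding it by a critically scaled norm of $v_3$ on $Q_\rho$, which is rendered $\epsilon^{\kappa}$-small by interpolating $\|v_3\|_{L^p_tL^q_x(Q_\rho)}\le\epsilon$ against the energy norm $\|v_3\|_{L^\infty_tL^2_x\cap L^2_tL^6_x}$ — legitimate for \emph{every} $1\le p,q\le\infty$.

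The decisive and hardest step is the remaining, \emph{purely horizontal} part of those terms: $\int\!\!\int|v|^2\,v_h\cdot\nabla_h\phi$ with $v_h=(v_1,v_2)$, the $v_hv_h$-piece of the pressure, and the horizontal part of the term with $(\partial_t+\Delta)\phi$, none of which carries an explicit $v_3$, so that the naive estimates only reproduce the critical (and non-small) quantities such as $C(\rho)^{2/3}$; were these contributions not absorbable, the iteration would not contract. Here $\operatorname{div}v=0$ must be exploited sharply: integration by parts in $x_h$ turns $\operatorname{div}_h v_h$ into $-\partial_3v_3$, after which an anisotropic Gagliardo--Nirenberg argument — in the spirit of the one-velocity-component literature and of the auxiliary quantities $\|\nabla v\|$, $\|w\|$ used in Theorem~\ref{theo2} — should convert the leftover into a small multiple of the dissipation $E(\rho)$, to be absorbed, plus a genuinely $\|v_3\|$-controlled remainder. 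That such a mechanism exists is consistent with the limiting case $v_3\equiv0$, in which $v$ solves the two-dimensional Navier--Stokes system in $(x_1,x_2)$ with $x_3$ a parameter and is therefore regular; making this quantitative, local, and uniform over $1\le p,q\le\infty$ is the crux. The remaining matters — negligibility of the far-field pressure for local energy solutions at small scales, the uniform interpolation of the $v_3$-bound, and the constant bookkeeping ensuring the stated dependences — are then routine.
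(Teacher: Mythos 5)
Your proposal has a genuine gap at exactly the point you yourself flag as ``the crux.'' The entire argument rests on the decay lemma, and within it on the claim that the purely horizontal contributions (the convective term $\int\!\!\int|v|^2\,v_h\cdot\nabla_h\phi$, the $v_h\otimes v_h$ part of the pressure source, and the horizontal part of the $(\partial_t+\Delta)\phi$ term) can be converted, via $\operatorname{div}_h v_h=-\partial_3 v_3$ and an anisotropic Gagliardo--Nirenberg argument, into a small multiple of $E(\rho)$ plus a remainder controlled by the scaled norm of $v_3$. You do not prove this; you only assert that such a mechanism ``should'' exist because the limit case $v_3\equiv0$ is regular. But this quantitative, one-scale absorption of the horizontal terms is precisely the difficulty that the one-component literature has not resolved by direct estimation: after integrating by parts, the terms that survive carry no smallness (they reproduce $C(\rho)$, $A(\rho)E(\rho)$, etc.\ with $O(1)$ constants depending on $M$), and no interpolation against the energy norm makes them contract without an additional smallness hypothesis on the full velocity. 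If such a direct decay lemma were available uniformly in $1\le p,q\le\infty$, it would yield far stronger unconditional one-component criteria than are currently known. Your heuristic is also slightly off: with $\bar v_3\equiv0$ the limiting system is not two-dimensional Navier--Stokes with $x_3$ as a parameter (the dissipation still contains $\partial_3^2$ and the a priori bounds are only the 3D suitable-weak-solution bounds); its regularity is itself the nontrivial anisotropic theorem of Kukavica--Rusin--Ziane \cite{KWM2017}. There are also secondary loose ends (the smallness of the $v_3$-norm degrades like $\theta^{-k(\frac2p+\frac3q-1)_+}$ along the iteration, so $\epsilon$ must be chosen after $k_0$; the claim $D(1)\lesssim C(1)$ for local energy solutions needs the decomposition \eqref{pressure_decomposition} and a far-field bound, not just Calder\'on--Zygmund), but these are repairable; the missing decay lemma is not.

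The paper's proof avoids this quantitative issue entirely. It argues by contradiction and compactness: if no $\epsilon(M,\epsilon_0)$ works, take a sequence $(v^n,\pi^n)$ with $r_0^{-2}\|v^n\|^3_{L^3(Q_{r_0}(z_0))}\le M$, scaled $v^n_3$-norms tending to zero, and $(\delta_0 r_0)^{-2}\|v^n\|^3_{L^3(Q_{\delta_0 r_0}(z_0))}>\epsilon_0$ for all $\delta_0$. The local energy inequality together with the single-scale $L^3$ bound (and the pressure decomposition \eqref{pressure_decomposition} in the local energy case) gives uniform bounds on the scaled energy at scale $r_0/3$; Aubin--Lions compactness produces a limit $(\bar v,\bar\pi)$ that is a suitable weak solution with $\bar v_3\equiv0$, hence a solution of the limiting system, which is regular by \cite[Theorem 3.1]{KWM2017}; strong $L^3$ convergence then contradicts the lower bound for small $\delta_0$. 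The smallness threshold $\epsilon(M,\epsilon_0)$ is obtained indirectly rather than through an explicit iteration, which is exactly how the paper sidesteps the horizontal-term estimate your proposal would need.
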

	
	\begin{remark} We add some comments on our results:
		\begin{enumerate}
			\item[1.] Theorem \ref{theo1} is inspired by \cite[Theorem 3.1]{KMT2021}, where the authors provided the initial regularity for $M$ sufficiently small in \eqref{v0}. As a consequence, the same conclusion holds if \eqref{v3v0} is replaced by $v_3 \in L^p_tL^q_x(Q_{r_0}(z_0))$ for $(p,q)$, $q>3$ satisfying \eqref{LPS}. 
			
			\item[2.] Theorem \ref{theo2} extends the results in \cite[Theorem 1.1]{GKT2007} and \cite[Theorems 3.1 and 3.2]{WZ2014} in the sense that we only need one component is sufficiently small instead of the whole velocity or two components. A special case of our result shows that $v$ is regular if 
			\begin{equation*}
				|v_1,v_2| \leq \frac{M}{\sqrt{T-t}} \quad \text{and} \quad |v_3| \leq \frac{\epsilon(M)}{\sqrt{T-t}},
			\end{equation*}
			which improves Leray's result \cite[page 227]{L1934}. In \cite[Theorem 1.3]{CW2021}, the authors proved that $z_0$ is a regular point under the following assumption for some $a \in (1,\infty)$ and for some $\rho > 0$
			\begin{equation} \label{CW}
				\limsup_{r \to 0^+} \, (-\log(r))^a r^{1-\frac{2}{p}-\frac{3}{q}} \|v_3\|_{L^p_tL^q_x(Q_{\rho,r}(z_0))} = 0,
			\end{equation} 
			where $1 \leq \frac{2}{p} + \frac{3}{q} \leq \frac{3}{2}$, $3 \leq q < \infty$, $Q_{\rho,r}(z_0) = B'_{\rho}(x'_0) \times (x_{03}-r,x_{03}+r) \times (t_0-r^2,t_0)$ with $B'_{\rho}(x'_0) = \{y \in \mathbb{R}^2 : |y-x'_0| < \rho\}$ and $x_0 = (x_0',x_{03})$. It can be seen that \eqref{CW} is not given in a dimensionless form as \eqref{v3z0}, but we need one of the additional conditions on $v$. As a result, in Theorem \ref{theo2}, if one of (i),(ii),(iii) or (iv) holds for all $z_0$ and $v_3$ satisfies \eqref{LPS} with $q>3$ instead of \eqref{v3z0} then the global regularity follows.
			
			\item[3.] Theorem \ref{theo3} is inspired by \cite[Theorems 2.1]{KWM2017,KWM2019}, but we generalize the condition on $v_3$ to the scaled $L^p_tL^q_x$-norm instead of only on $L^3_{t,x}$-norm. We remark that the condition is not involved the pressure in the case of local energy solutions due to the decomposition \eqref{pressure_decomposition}. Moreover, Theorem \ref{theo3} also yields \cite[Theorem 2.2]{KWM2019}. More precisely, let $(v,\pi)$ be a Leray-Hopf weak solution to \eqref{NS} in $\mathbb{R}^3 \times (0,T)$. If $v$ is regular then for $q \geq 3$ and $t \in (0,T)$
			\begin{equation*}
				\|v_1(\cdot,t),v_2(\cdot,t)\|_{L^q(\mathbb{R}^3)} \leq \frac{M}{(T-t)^{\frac{1}{2}-\frac{3}{2q}}} \quad \text{and} \quad \|v_3(\cdot,t)\|_{L^q(\mathbb{R}^3)} \leq \frac{\epsilon(M)}{(T-t)^{\frac{1}{2}-\frac{3}{2q}}}.
			\end{equation*} 
			
			\item[4.] In \cite[Theorem 1.1]{BK2019} the authors proved that smooth bounded solutions of \eqref{NS} in $\mathbb{R}^3 \times (-1,0)$ do not blow up at $t = 0$ if
			\begin{align*}
				&\|v(\cdot,t)\|_{L^\infty(\mathbb{R}^3)} \leq \frac{C}{\sqrt{-t}} \qquad \text{for } t \in (-1,0),
				\\
				&v_3 \in L^p(-1,0;L^q(\mathbb{R}^3))  \qquad \text{for} \quad  \frac{2}{p} + \frac{3}{q} = 1, q \in (3,\infty].
			\end{align*}
			They also provided the same result in the cases $v_3$ belongs to other invariant spaces such as weak Lebesgue, homogeneous Morrey–Campanato and homogeneous Besov spaces. Note that our conditions on $v_3$ as in \eqref{v3v0}, \eqref{v3z0} or \eqref{v3r0} can be considered in the above invariant spaces. We should mention that by using \eqref{CW} the authors in \cite{CW2021} provided the global regularity in the strict case, i.e.,
			\begin{equation*}
				v_3 \in L^p(0,T;L^q(\mathbb{R}^3))  \qquad \text{for} \quad \frac{2}{p} + \frac{3}{q} < 1, q \in (3,\infty],
			\end{equation*}
			which is due to the logarithmic factor. An immediate consequence of Theorem \ref{theo3} is that if for some $\epsilon(v,\pi) > 0$
			\begin{equation*}
				\|v_3\|_{L^p(0,\infty;L^q(\mathbb{R}^3))}  \leq \epsilon \qquad \text{for} \quad \frac{2}{p} + \frac{3}{q} \leq 1, q \in [3,\infty],
			\end{equation*}
			then $v$ is a regular solution in $\mathbb{R}^3 \times (0,\infty)$. Therefore, we leave an open question whether or not the above smallness assumption or the logarithmic factor in \eqref{CW} can be relaxed.
			
			\item[5.] Theorems \ref{theo1}-\ref{theo3} partially answer the question posed in \cite{N2018} and also hold for Leray-Hopf weak solutions under some futher assumption on $v_0$, for example in the case $v_0 \in L^2(\mathbb{R}^3) \cap L^3(\mathbb{R}^3)$. Note that for our results in the case of suitable weak solutions we can replace $\mathbb{R}^3$ by bounded domains $\Omega$ in which we need to require $0 < r_0 \leq \min\{\sqrt{t_0},\text{dist}(x_0,\partial \Omega)\}$.
		\end{enumerate}
	\end{remark}

	The rest of the paper is organized as follows: Section \ref{sec:pre} is devoted to recalling notions of weak solutions to \eqref{NS}. Proofs of Theorems \ref{theo1}-\ref{theo3} will be given in Sections \ref{sec:theo1}-\ref{sec:theo3}, respectively.
	
	%
	\section{Preliminaries} \label{sec:pre}
	%
	
	We introduce some notations and recall some notions of weak solutions to (\ref{NS}). For $x \in \mathbb{R}^3$, $r, t > 0$ and $z = (x,t)$  we denote $dz = dxdt$, $Q_r(z) = B_r(x) \times (t-r^2,t)$. Let $A$ be a set and $\delta \in \mathbb{R}$ we denote $\delta A = \{\delta a : a \in A\}$. For $A = (a_{ij})$ and $B = (b_{ij})$ be two matrices we define $A:B = \sum_{i,j} a_{ij} b_{ij}$ and $u \otimes v = (c_{ij})$ where $c_{ij} = u_i v_j$ for $u = (u_1, u_2, u_3), v = (v_1, v_2, v_3)$. For convenience, we denote $\partial_i = \partial_{x_i}$ for $1 \leq i \leq 3$. We use $C$ for denoting absolute constants. We also use the usual notions for Bochner, Lebesgue and Sobolev spaces. For $q \in [1,\infty]$, we define
	$$
	L^q_{\textnormal{uloc}}(\mathbb{R}^3) := \left\{f \in L^q_{\textnormal{loc}}(\mathbb{R}^3) : \|f\|_{L^q_{\textnormal{uloc}}(\mathbb{R}^3)} := \sup_{x_0 \in \mathbb{R}^3} \|f\|_{L^q (B_1(x_0))} < \infty \right\}.
	$$
	Let $E^q$ be the closure of $C^\infty_0(\mathbb{R}^3)$ in $L^q_{\textnormal{uloc}}(\mathbb{R}^3)$-norm. In \cite{LR2002}, the space $E^q$ can be defined by an equivalent way as follows
	$$
	E^q := \left\{f \in L^q_{\textnormal{uloc}}(\mathbb{R}^3) : \lim_{|x| \to \infty} \|f\|_{L^q(B_1(x))} = 0\right\}.
	$$
	We recall the well-known Gagliardo-Nirenberg interpolation inequality (see for example in \cite{CKN1982}) for $u \in H^1(B_r(x_0))$ with $r > 0$, $x_0 \in \mathbb{R}^3$
	\begin{equation} \label{Inter_i}
		\|u\|^{q}_{L^q(B_r(x_0))} \leq C \|\nabla u\|^{2a}_{L^2(B_r(x_0))} \|u\|^{q-2a}_{L^2(B_r(x_0))} + C r^{-2a} \|u\|^q_{L^2(B_r(x_0))},
	\end{equation}
	where $2 \leq q \leq 6$ and $a = \frac{3}{4}(q - 2)$. Let us recall the notions of suitable weak solutions (see \cite{CKN1982, L1998}) and local energy solutions or local Leray solutions (see \cite{BT2020, JS2013,JS2014,KMT2020_2,KMT2021, KS2007,LR2002}) to \eqref{NS}, respectively.
	
	\begin{definition}[Suitable weak solutions] \label{def_sws}
		For any domain $\Omega \subset \mathbb{R}^3$ and open interval $I \subset (0,\infty)$, we say $(v, \pi)$ is a suitable weak solution to \eqref{NS} in $\Omega \times I$ if it satisfies
		\begin{enumerate}
			\item $v \in L^\infty(I;L^2(\Omega)) \cap L^2(I;\dot{H}^1(\Omega))$, $\pi \in L^{\frac{3}{2}}(\Omega \times I)$;
			
			\item \eqref{NS} in the sense of distributions in $\Omega \times I$;
			
			\item and the local energy inequality
			\begin{align} \label{lei}
				\int_\Omega |v|^2 \varphi \,dx + 2  \int_{\Omega_s} |\nabla v|^2 \varphi \,dz
				&\leq \int_{\Omega_s} |v|^2 (\partial_t \varphi
				+ \Delta \varphi) + 
				(|v|^2 + 2\pi) (v \cdot \nabla \varphi) \,dz
			\end{align}
			for $s \in I$, $\Omega_s = \Omega \times (0,s)$ and all non-negative $\varphi \in C^\infty_0(\Omega \times I)$.
		\end{enumerate}
	\end{definition}
	
	\begin{definition}[Local energy solutions] \label{def_les}
		A vector field $v \in L^2_{\textnormal{loc}}(\mathbb{R}^3 \times [0, T))$ is a local energy solution to \eqref{NS} with divergence free initial data $v_0 \in L^2_{\textnormal{uloc}}(\mathbb{R}^3)$ if
		\begin{enumerate}
			\item for some $\pi \in L^{\frac{3}{2}}_{\textnormal{loc}} (\mathbb{R}^3 \times [0, T))$, the pair $(v, \pi)$ is a distributional solution to \eqref{NS};
			
			\item for any $r > 0$,
			\begin{equation*} 
				\esssup_{0 \leq t < \min\{r^2, T\}} \sup_{x_0 \in \mathbb{R}^3} \int_{B_r(x_0)} |v|^2 \,dx + \sup_{x_0 \in \mathbb{R}^3} \int^{\min\{r^2,T\}}_0 \int_{B_r(x_0)}|\nabla v|^2 \,dz < \infty,
			\end{equation*}
			\item for all compact subsets $K$ of $\mathbb{R}^3$ we have $v(t) \to v_0$ in $L^2(K)$ as $t \to 0^+$;
			
			\item $(v, p)$ satisfies the local energy inequality \eqref{lei} for all non-negative functions $\varphi \in C^\infty_0(Q)$ with all cylinder $Q$ compactly supported in $\mathbb{R}^3 \times (0, T)$;
			
			\item for every $x_0 \in \mathbb{R}^3$ and $r > 0$, there exists $c_{r, x_0} \in L^\frac{3}{2}((0, T))$ such that
			\begin{align} \label{pressure_decomposition}
				\pi(x, t) - c_{r,x_0}(t) &= \frac{1}{3}|v(x, t)|^2 + \textnormal{p.v.}  \int_{B_{3r}(x_0)} K(x - y) : (v \otimes v)(y, t) \,dy
				\nonumber\\
				&\quad + \int_{\mathbb{R}^3 \setminus B_{3r}(x_0)} (K(x - y) - K(x_0 - y)) : (v \otimes v)(y, t) \,dy
			\end{align}
			in $L^\frac{3}{2}(B_{2r}(x_0) \times (0, T))$ where $K(x) := \nabla^2\big(\frac{1}{4\pi|x|}\big)$;
			
			\item and for any compactly supported functions $w \in L^2(\mathbb{R}^3)$,
			\begin{equation*}
				\text{the function}\quad t  \mapsto \int_{\mathbb{R}^3} v(x, t) \cdot w(x) \,dx \quad \text{is continuous on } [0, T).
			\end{equation*}
		\end{enumerate}
	\end{definition}
	If $(v,p$) is a local energy solution to \eqref{NS} in $\mathbb{R}^3 \times (0,T)$ for all $T \in (0,\infty)$ then we say that it is a local energy solution in $ \mathbb{R}^3 \times (0, \infty)$. We recall a version of  Gronwall inequality due to Bradshaw and Tsai.
	
	\begin{lemma}[Lemma 2.2 in \cite{BT2020}] \label{lem_BT}
		Let $f \in L^\infty_{\textnormal{loc}}([0, T_0]; [0, \infty))$ satisfies 
		$$
		f(t) \leq a + b \int^t_0 f(s) + f^m(s) \,ds \qquad \text{for } t \in [0, T_0]
		$$
		and for some constants $a,b > 0$, $m \geq 1$. Then 
		$$
		f(t) \leq 2a \quad \forall t \in [0, T] \quad \text{where} \quad  T = \min\left\{T_0, \frac{C}{b(1 + a^{m-1})} \right\}.
		$$
	\end{lemma}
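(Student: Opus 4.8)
The plan is to reduce the problem, which concerns an only locally bounded (and a priori discontinuous) function $f$, to a differential inequality for a \emph{continuous} majorant, after which a short continuity argument closes the estimate. First I would introduce the running integral
\[
  g(t) := a + b\int_0^t \big(f(s) + f^m(s)\big)\,ds, \qquad t \in [0,T_0].
\]
Since $f \ge 0$ and $f \in L^\infty_{\mathrm{loc}}$, the integrand is nonnegative and locally bounded, so $g$ is absolutely continuous and nondecreasing, with $g(0) = a > 0$. The hypothesis is exactly $0 \le f(t) \le g(t)$ for every $t \in [0,T_0]$, and this is the only place where the (possibly rough) $f$ enters the argument; from here on I would work solely with the continuous function $g$.

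Next I would convert the integral bound into a differential inequality. Because $0 \le f \le g$ and $m \ge 1$, monotonicity of $x \mapsto x^m$ on $[0,\infty)$ gives $f^m \le g^m$, whence for almost every $t$
\[
  g'(t) = b\big(f(t) + f^m(t)\big) \le b\big(g(t) + g^m(t)\big), \qquad g(0) = a.
\]
This is an autonomous differential inequality, and one could compare $g$ with the solution of the separable ODE $y' = b(y+y^m)$, $y(0)=a$; I would instead take the more elementary route of freezing the right-hand side.

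The core is then a continuity (bootstrap) argument. Set $t^* := \sup\{t \in [0,T_0] : g(s) \le 2a \text{ for all } s \in [0,t]\}$; since $g$ is continuous with $g(0) = a < 2a$, we have $t^* > 0$, and if $t^* < T_0$ then $g(t^*) = 2a$. On $[0,t^*]$ the bound $g \le 2a$ lets me freeze the nonlinearity, $g'(s) \le b\big(2a + (2a)^m\big)$, and integrating yields $g(t) \le a + b\,t\,\big(2a + (2a)^m\big)$. Evaluating at $t^*$ forces
\[
  t^* \ge \frac{a}{b\big(2a + (2a)^m\big)} = \frac{1}{b\big(2 + 2^m a^{m-1}\big)} \ge \frac{1}{2^m\,b\,(1 + a^{m-1})},
\]
where the last step uses $2 + 2^m a^{m-1} \le 2^m(1 + a^{m-1})$, valid for $m \ge 1$. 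Taking $C = C(m) := 2^{-m}$ and $T = \min\{T_0, C/(b(1+a^{m-1}))\}$ therefore guarantees $T \le t^*$ in both cases ($t^* < T_0$ and $t^* = T_0$), so $g \le 2a$ on $[0,T]$ and consequently $f \le g \le 2a$ there, which is the assertion.

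The one genuine subtlety — and the step I would be most careful about — is that $f$ is merely $L^\infty_{\mathrm{loc}}$, so a naive ``first exit time of $f$'' argument is unavailable. Passing to the continuous majorant $g$ resolves this cleanly: the regularity required by the continuity argument is \emph{manufactured} by the time integral in the hypothesis rather than assumed on $f$ itself. A secondary point worth recording is that the constant $C$ necessarily depends on the exponent $m$ (through the factor $2^{-m}$); this is harmless in the applications, where $m$ is a fixed exponent arising from the nonlinear term.
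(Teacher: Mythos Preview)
Your argument is correct and complete: passing to the absolutely continuous majorant $g$, deriving the autonomous differential inequality $g' \le b(g + g^m)$, and running a continuity argument on the set $\{t : g \le 2a \text{ on } [0,t]\}$ is the standard way to prove such nonlinear Gronwall-type bounds, and the arithmetic leading to $t^* \ge 2^{-m}/\big(b(1+a^{m-1})\big)$ is clean.

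There is nothing to compare against, however: the paper does not supply its own proof of this lemma. It is quoted verbatim as Lemma~2.2 of Bradshaw--Tsai \cite{BT2020} and used as a black box in the proof of Theorem~\ref{theo1}. Your write-up is therefore strictly more than what the paper contains. Two small remarks worth keeping in a final version: (i) your observation that one must pass to the continuous $g$ because $f$ is only $L^\infty_{\mathrm{loc}}$ is exactly the point that makes the statement nontrivial, and (ii) the $m$-dependence of the constant $C$ that you flag is consistent with the paper's usage, where $m=3$ is fixed.
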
	
	
	
	
	%
	\section{Proof of Theorem \ref{theo1}} \label{sec:theo1}
	%
	
	\begin{proof}[Proof of Theorem \ref{theo1}]\quad $\bullet$\,\textnormal{(}The case of local energy solution\textnormal{)}\, Let $\gamma \in (0, \frac{r_0}{3}]$. The local energy inequality (see \cite[Theorem 3.4]{KMT2021}) with $\gamma \leq r \leq \frac{r_0}{3}$ 
	and $s \in (0,r^2_0]$ yields
	\begin{align*}
		E_r(s) &:= \esssup_{t \in (0,s)} \frac{1}{r} \int_{B_r(x_0)} |v(t)|^2 \,dx + \frac{1}{r} \int_0^s \int_{B_r(x_0)} |\nabla v|^2 \,dz + \frac{1}{r^2} \int_0^s \int_{B_r(x_0)} |\pi_{r_0,x_0}|^\frac{3}{2} \,dz
		\\
		&\leq 2M + \frac{C}{\gamma^2} \int_0^s \mathcal{E}_\gamma(t) \,dt + \frac{C}{r^2} \int_0^s \int_{B_{2r}(x_0)} |v|^3 \,dz + \frac{C}{r^2} \int_0^s \int_{B_{2r}(x_0)} |\pi_{r_0,x_0}|^\frac{3}{2} \,dz,
	\end{align*}
	where we used \eqref{v0} and \eqref{pressure_decomposition} with $(x,t) \in B_{2r_0}(x_0) \times (0,T)$ 
	\begin{align*}
		\pi_{r_0,x_0} &:= \pi(x,t) - c_{r_0,x_0}(t) = \frac{1}{3} |v(x,t)|^2 + p_1(x,t) + p_2(x,t),
		\\
		p_1(x,t) &:= \textnormal{p.v.} \int_{B_{3r_0}(x_0)} K(x - y) : (v \otimes v)(y, t) \,dy,
		\\
		p_2(x,t) &:= \int_{\mathbb{R}^3 \setminus B_{3r_0}(x_0)} (K(x - y) - K(x_0-y) : (v \otimes v)(y, t) \,dy,
		\\
		\mathcal{E}_\gamma(s) &:= \sup_{r \in [\gamma,r_0]} E_r(s).
	\end{align*}
	The cubic term is denoted by $R_3$ and can be estimated by using \eqref{Inter_i} and Young inequality as follows
	\begin{equation*}
		R_3 \leq \epsilon \mathcal{E}_\gamma(s) + \frac{C_\epsilon}{\gamma^2} \int^{s}_{0} \mathcal{E}^\frac{3}{2}_\gamma(t) + \mathcal{E}^3_\gamma(t) \,dt
	\end{equation*}
	for some absolute constant $\epsilon \in (0,1)$ to be specified later. For the pressure term which is denoted by $R_4$ and is bounded by using the Calder\'{o}n-Zygmund estimate
	\begin{align*}
		R_4 &\leq \frac{C}{r^2} \int_0^{s} \int_{B_{3r}(x_0)}  |v|^3 \,dz + \frac{C}{r^2} \int_0^{s}\int_{B_{2r}(x_0)} |p_2|^\frac{3}{2} \,dz.
	\end{align*}
	For any $x \in B_{2r}(x_0) \subset B_{2r_0}(x_0)$ and $y \in \mathbb{R}^3 \setminus B_{3r_0}(x_0)$ we have the fact that 
	\begin{equation*}
		|K(x-y) - K(x_0-y)| \leq C\frac{|x-x_0|}{|x_0-y|^4} \leq \frac{Cr}{|x_0 - y|^4}.
	\end{equation*}	
	Therefore, it follows that for $(x,t) \in B_{2r}(x_0) \times (0,s)$ with $s \in (0,r_0^2]$
	\begin{align*} 
		|p_2(x,t)| &\leq Cr \int_{\mathbb{R}^3 \setminus B_{3r_0}(x_0)} \frac{|v(y,t)|^2}{|x_0-y|^4} \,dy
		\\
		&\leq C \sum^\infty_{i=0} \int_{B_{2^{i+1}r_0}(x_0) \setminus B_{2^ir_0}(x_0)} \frac{|v(y,t)|^2}{|x_0-y|^4} \,dy
		\\
		&\leq C \sum^\infty_{i=0} \frac{1}{2^{4i}}\int_{B_{2^{i+1}r_0}(x_0)} |v(y,t)|^2 \,dy
		\\
		&\leq C \esssup_{t\in (0,r_0^2)} \sup_{x_0 \in \mathbb{R}^3} \int_{B_{r_0}(x_0)} |v(x,t)|^2 \,dx =: V < \infty,
	\end{align*}
	which yields
	\begin{equation*}
		R_4 \leq CV^\frac{3}{2} + \epsilon \mathcal{E}_\gamma(s) + \frac{C_\epsilon}{\gamma^2} \int^{s}_{0} \mathcal{E}^\frac{3}{2}_\gamma(t) + \mathcal{E}^3_\gamma(t) \,dt 
	\end{equation*}	
	and for $0 < \gamma < r \leq \frac{r_0}{3}$ 
	\begin{equation*}
		E_r(s) \leq 2M + CV^\frac{3}{2} + 2\epsilon \mathcal{E}_\gamma(s) + \frac{C_\epsilon}{\gamma^2} \int^{s}_{0} \mathcal{E}_\gamma(t) + \mathcal{E}^3_\gamma(t) \,dt.
	\end{equation*}	
	On the other hand, for $\frac{r_0}{3} < r \leq r_0$ 
	\begin{equation*}
		E_r(s) \leq \frac{9}{r_0} \left(\esssup_{t \in (0,r^2_0)} \int_{B_{r_0}(x_0)} |v(t)|^2 \,dx +  \int_{Q_{r_0}(x_0,r^2_0)} |\nabla v|^2 +  \frac{1}{r_0} |\pi_{r_0,x_0}|^\frac{3}{2} \,dz\right) := E_0 < \infty.
	\end{equation*}	
	Therefore, by choosing $\epsilon = \frac{1}{4}$ we obtain for all $s \in (0,r^2_0]$ and for all $\gamma \in (0,r_0]$
	\begin{equation*}
		\mathcal{E}_\gamma(s) \leq C(E_0 + M + V^\frac{3}{2}) + \frac{C}{\gamma^2} \int^{s}_{0} \mathcal{E}_\gamma(t) + \mathcal{E}^3_\gamma(t) \,dt.
	\end{equation*}	
	Applying Lemma \ref{lem_BT} to the above form one has 
	\begin{equation*}
		\mathcal{E}_\gamma(s) \leq E_1, \qquad \forall s \in (0, \lambda \gamma^2], \forall \gamma \in (0,r_0],
	\end{equation*}		
	where $E_1 := C(E_0 + M +  V^\frac{3}{2})$ and $\lambda := \frac{C}{1 + E_1^2} \leq 1$. It yields $E_r(r^2) \leq \lambda^{-1}E_1$ for all $r \in (0,\lambda r_0]$. In particular,
	\begin{equation*}
		E_r(r^2) \leq \lambda^{-1}(E_1 + E_0) +  =: E_2, \qquad \forall r \in (0,r_0].
	\end{equation*}	
	We assume that the conclusion is not true, i.e., 
	\begin{equation*}
		(\delta_0 r_0)^{-2} \|v\|^3_{L^3(Q_{\delta_0 r_0}(x_0,(\delta_0 r_0)^2))} > \epsilon_0, \qquad \forall \delta_0 \in (0,1).
	\end{equation*}	
	For $(x,t) \in Q_1(0,0)$ and $\delta \in (0,r_0]$ we define 
	\begin{equation*} 
		v_\delta(x,t) := \delta v(\delta x + x_0,\delta^2 t + \delta^2)
		\quad 
		\text{and}
		\quad
		\pi_\delta(x,t) := \delta^2 \pi_{r_0,x_0}(\delta x + x_0,\delta^2 t + \delta^2).
	\end{equation*}
	Then $(v_\delta,\pi_\delta)$ is a sequence of suitable weak solutions to \eqref{NS} in $Q_1(0,0)$ since $(v,\pi_{r_0,x_0})$ is a suitable weak solution to \eqref{NS} in $Q_\delta(x_0,\delta^2)$ for all $\delta \in (0,r_0]$. Moreover, it can be seen from the bound of $(v,\pi)$ that
	\begin{equation*}
		\|v_\delta\|^2_{L^\infty(-1,0;L^2(B_1(0)))} + \|\nabla v_\delta\|^2_{L^2(-1,0;L^2(B_1(0)))} + 	\|\pi_\delta\|^\frac{3}{2}_{L^\frac{3}{2}(-1,0;L^\frac{3}{2}(B_1(0)))} \leq E_2.
	\end{equation*}
	It implies that there exist a subsequence of $(v_\delta,\pi_\delta)$ (still denoted by $(v_\delta,\pi_\delta$)) and a pair $(\bar{v},\bar{\pi})$ and by using Aubin-Lion lemma and interpolation we find that as $\delta \to 0$
	\begin{equation*} 
		\left\{
		\begin{aligned}
			v_\delta  &\overset{\ast}{\rightharpoonup} \bar{v} \qquad \text{ in } L^\infty(-1,0;L^2(B_1(0))),
			\\
			\nabla v_\delta  &\overset{}{\rightharpoonup} \nabla \bar{v} \quad\, \text{ in } L^2(-1,0;L^2(B_1(0))),
			\\
			\pi_\delta  &\overset{}{\rightharpoonup} \bar{\pi} \qquad \text{ in } L^\frac{3}{2}(-1,0;L^\frac{3}{2}(B_1(0))),
			\\
			v_\delta &\to \bar{v} \qquad \text{ in } L^a(-1,0;L^a(B_1(0))) \qquad \text{for} \quad 1 \leq a < \frac{10}{3}.
		\end{aligned}
		\right.
	\end{equation*}
	It can be seen that $(\bar{v},\bar{\pi})$ is also a suitable weak solution to \eqref{NS} in $Q_1(0,0)$. In addition, using \eqref{v3v0} we obtain for $1 \leq p,q \leq \infty$
	\begin{equation*} \label{lim_v_delta}
		\limsup_{\delta \to 0^+} \|v_{\delta,3}\|_{L^p(-1,0;L^q(B_1(0)))} = \limsup_{\delta \to 0^+} \delta^{1-\frac{2}{p}-\frac{3}{q}} \|v_3\|_{L^p(0,\delta^2;L^q(B_\delta(x_0)))} = 0,
	\end{equation*}	 
	which implies that $\bar{v}_3 \equiv 0$. Thus, $(\bar{v},\bar{\pi})$ is also a suitable weak solution to "the limiting system" which is formally reduced from \eqref{NS} by setting $\bar{v}_3 = 0$. The authors in \cite[Theorem 3.1]{KWM2017} proved that $\bar{v}$ is regular. However, for some $0 < \nu < r_0^{-1}$ so that $0 < \delta \nu < 1$ we have
	\begin{equation*}
		(\nu r_0)^{-2} \|v_\delta\|^3_{L^3(Q_{\nu r_0}(0,0))}  = (\delta\nu r_0)^{-2} \|v\|^3_{L^3(Q_{\delta\nu r_0}(x_0,(\delta \nu r_0)^2))} > \epsilon_0,
	\end{equation*}	
	and
	\begin{equation*}
		(\nu r_0)^{-2} \|\bar{v}\|^3_{L^3(Q_{\nu r_0}(0,0))}  \geq  \epsilon_0 \quad \text{as} \quad \delta \to 0,
	\end{equation*}	
	which is a contradiction with the regularity of $\bar{v}$ by choosing $\nu$ sufficiently small. Therefore, the proof is complete by iteration and using the one scale regularity criterion given by \eqref{Wolf-2015}.
	
	$\bullet$\,\textnormal{(}The case of suitable weak solution\textnormal{)} In this case the proof is quite similar to that of for local energy solutions and we only need to estimate the pressure term with $\pi$ instaed of $\pi_{r_0,x_0}$. As in \cite[Theorem 3.1]{KMT2021}, we decompose the pressure as $\pi = p_1 + p_2$ with
	\begin{equation*}
		p_1(x,t) := - \frac{1}{3} \xi(x)|v(x,t)|^2 + \textnormal{p.v.} \int_{\mathbb{R}^3} K(x-y):(v \otimes v)(y,t) \xi(y) \,dy,
	\end{equation*}
	where $K$ is given as in \eqref{pressure_decomposition}, $\xi = \xi_{x_0}$ is a smooth cut-off function with $\xi = 1$ in $B_\rho(x_0)$ and supported in $B_{2\rho}(x_0)$ for some $\rho \in [3r,\frac{r_0}{2}]$, here $0 < \gamma \leq r \leq \frac{r_0}{8C_1}$ for some absolute constant $C_1$ will be given below and $\rho$ will be specified later. Since $p_2$ is harmonic in $B_{\rho}(x_0)$ and using the Calder\'{o}n-Zygmund estimate we find that
	\begin{align*}
		\int_{B_{2r}(x_0)} |\pi|^\frac{3}{2} \,dx 
		&\leq C\int_{B_{2r}(x_0)} |p_1|^\frac{3}{2} \,dx + C\left(\frac{r}{\rho}\right)^3 \int_{B_\rho(x_0)} |p_2|^\frac{3}{2} \,dx
		\\
		&\leq C\int_{B_{2r}(x_0)} |p_1|^\frac{3}{2} \,dx + C\left(\frac{r}{\rho}\right)^3 \int_{B_\rho(x_0)} |p_1|^\frac{3}{2} \,dx  + C\left(\frac{r}{\rho}\right)^3 \int_{B_\rho(x_0)} |\pi|^\frac{3}{2} \,dx
		\\
		&\leq C\int_{B_{2\rho}(x_0)} |v|^3 \,dx + C\left(\frac{r}{\rho}\right)^3 \int_{B_\rho(x_0)} |\pi|^\frac{3}{2} \,dx,
	\end{align*}
	which implies for some absolute constant $C_1 > 1$
	\begin{align*}
		R_4 := \frac{C}{r^2} \int^s_0 \int_{B_{2r}(x_0)} |\pi|^\frac{3}{2} \,dz &\leq \frac{C}{r^2} \int^s_0\int_{B_{2\rho}(x_0)} |v|^3 \,dz + \frac{C_1r}{\rho} \frac{1}{\rho^2} \int^s_0 \int_{B_\rho(x_0)} |\pi|^\frac{3}{2} \,dz
		\\
		&\leq \frac{C}{r^2} \int^s_0\int_{B_{8C_1r}(x_0)} |v|^3 \,dz + \frac{1}{4} \mathcal{E}_\gamma(s)
		\\
		&\leq \frac{1}{4} \mathcal{E}_\gamma(s) + \epsilon \mathcal{E}_\gamma(s) + \frac{C_\epsilon}{\gamma^2} \int^{s}_{0} \mathcal{E}^\frac{3}{2}_\gamma(t) + \mathcal{E}^3_\gamma(t) \,dt 
	\end{align*}
	by choosing $\rho = 4C_1r \in [3r,\frac{r_0}{2}]$ and using the estimate of $R_3$ with the fact that $8C_1r \in [\gamma,r_0]$. Thus the rest of the proof is almost the same that of the previous case. We omit the details and end the proof. 
	\end{proof}

	%
	\section{Proof of Theorem \ref{theo2}} \label{sec:theo2}
	%
	
	\begin{proof}[Proof of Theorem \ref{theo2}]
\quad $\bullet$\,\textnormal{(}The case of local energy solution\textnormal{)}\, 	
	The local energy inequality gives us for $0 < r \leq \frac{r_0}{3}$
	\begin{align*}
		E(r) &:= \esssup_{t \in (t_0-r^2,t_0)} \frac{1}{r} \int_{B_r(x_0)} |v(t)|^2 \,dx + \frac{1}{r} \int_{Q_r(z_0)} |\nabla v|^2 \,dz + \frac{1}{r^2} \int_{Q_r(z_0)} |\pi_{r_0,x_0}|^\frac{3}{2} \,dz
		\\
		&\leq C\left(1 +  \frac{1}{9r^2} \|v\|^3_{L^3(Q_{3r}(z_0))}  + \max\{t_0^\frac{9}{4},1\}\|v_0\|^3_{L^2_{\textnormal{uloc}}(\mathbb{R}^3)} \right) =: C(1 + A(3r) + V_0),
	\end{align*}
	where we used H\"{o}lder, Young and Calder\'{o}n-Zygmund inequalities, a similar estimate for $p_2$ as in the proof of Thereom \ref{theo1} and \cite[Lemma 3.5]{KMT2020_2}. It can be seen that for $0 < 2r \leq \rho \leq r_0$
    \begin{equation*}
    	A(r) \leq C\left(\frac{r}{\rho}\right) A(\rho)  + C\left(\frac{\rho}{r}\right)^2 \frac{1}{\rho^2}\int_{Q_{\rho}(z_0)} |v-v_\rho|^3 \,dz  =: C\left(\frac{r}{\rho}\right) A(\rho)  + C\left(\frac{\rho}{r}\right)^2 \bar{A}(\rho).
    \end{equation*}
	\textbf{Case (i).} Similarly to \cite[Lemma A.2]{WZ2014}, by using H\"{o}lder and Poincar\'{e} inequalities we obtain 
	\begin{equation*} 
		\bar{A}(\rho) \leq CE(\rho)^\frac{3(\alpha+\beta)}{2} \left(\rho^{1-\frac{2}{p_0}-\frac{3}{q_0}}  \|v-v_\rho\|_{L^{p_0}_tL^{q_0}_x (Q_\rho(z_0))}\right)^{3(1-\alpha-\beta)} \leq CE(\rho)^\frac{3(\alpha+\beta)}{2} M^{3(1-\alpha-\beta)},
	\end{equation*}
	where $0 < \rho \leq r_0$, $\alpha$ and $\beta$ should satisfy $\alpha \geq 0, \beta \geq 0$ and
	\begin{align*}
		1 &= \frac{3\alpha}{2} + \frac{\beta}{2} + \frac{3(1-\alpha-\beta)}{q_0},
		\quad 1 = \frac{3\beta}{2} + \frac{3(1-\alpha-\beta)}{p_0}, \quad 1 > \alpha + \beta, \quad \frac{2}{3} > \alpha, \beta,
		\\
		\text{i.e., } \alpha &= \frac{2}{3} \left(\frac{3}{p_0} + \frac{3}{q_0} - 2\right) \left(\frac{4}{p_0} + \frac{6}{q_0} - 3\right)^{-1}, \quad \beta =  \left(\frac{2}{p_0} + \frac{4}{q_0} - 2\right) \left(\frac{4}{p_0} + \frac{6}{q_0} - 3\right)^{-1}.
	\end{align*}
	In order to ensure $\alpha,\beta \geq 0$, we need to restrict ourselves to the cases
	\begin{equation*}
		\frac{3}{2} < \frac{2}{p_0} + \frac{3}{q_0} < 2,\quad \frac{3}{p_0} + \frac{3}{q_0} \geq 2,\quad \frac{2}{p_0} + \frac{4}{q_0} \geq 2, \quad p_0,q_0 < \infty.
	\end{equation*}
	However, other cases follow by using H\"{o}lder inequality. Indeed, for $(p_1,q_1)$ such that $1 \leq \frac{2}{p_1} + \frac{3}{q_1} < 2$ with $\frac{3}{2} < q_1 \leq \infty$ we can find $(p_0,q_0)$ satisfying the above conditions and 
	\begin{equation*}
		\rho^{1-\frac{2}{p_1}-\frac{3}{q_1}}  \|v-v_\rho\|_{L^{p_1}_tL^{q_1}_x (Q_\rho(z_0))} \leq C \rho^{1-\frac{2}{p_0}-\frac{3}{q_0}} \|v-v_\rho\|_{L^{p_0}_tL^{q_0}_x (Q_\rho(z_0))}.
	\end{equation*}
	In addition, we have $0 \leq 3(\alpha + \beta) < 2$ since our restriction on $(p_0,q_0)$.  By using the above estimates and Young inequality we find that for $0 < 3(\alpha + \beta) < 2$,
	\begin{align*}
		A(r) &\leq C\left(\frac{r}{\rho}\right) A(\rho)  + C\left(\frac{\rho}{r}\right)^2 \bar{A}(\rho) \qquad (0 < 2r \leq \rho \leq r_0)
		\\
		&\leq C\left(\frac{r}{\rho}\right) A(\rho) + C\left(\frac{\rho}{r}\right)^2 E(\rho)^\frac{3(\alpha+\beta)}{2} M^{3(1-\alpha-\beta)}
		\\
		&\leq C\left(\frac{r}{3\rho}\right) A(3\rho) + C\left(\frac{3\rho}{r}\right)^2 (C(1 + V_0 +  A(3\rho)))^\frac{3(\alpha+\beta)}{2} M^{3(1-\alpha-\beta)} \quad (0 < 6r \leq 3\rho \leq r_0)
		\\
		&\leq \left(\frac{C_0\theta}{3} + \frac{1}{4}\right) A(3\rho) + \theta^{-\frac{4}{2-3(\alpha+\beta)}} C_{\alpha,\beta,M,V_0}  \qquad (r = \theta\rho, 2\theta \leq 1) 
		\\
		&\leq \frac{1}{2} A(3\rho) + \theta^{-\frac{4}{2-3(\alpha+\beta)}} C_{\alpha,\beta,M,V_0}  \qquad (4C_0\theta \leq 3, 2\theta \leq 1, 3\rho \leq r_0), 
	\end{align*}
	where $C_0 > 1$ is an absolute constant, and for $\alpha + \beta = 0$ (i.e., $\alpha = \beta = 0$ or $p_0 = q_0 = 3$)
	\begin{equation*}
		A(\theta\rho) \leq \frac{1}{2} A(3\rho) + \theta^{-2} C_M, \qquad (4C_0\theta \leq 3, 2\theta \leq 1, 3\rho \leq r_0).
	\end{equation*}
	We choose $\theta = \min\{\frac{3}{4C_0},\frac{1}{2}\}$ and denote $\theta_0 = \frac{\theta}{3}$, $\rho_0 = 3\rho$. By iteration, 
	\begin{equation*}
		A(\theta^k_0\rho_0) \leq \frac{1}{2^k} A(\rho_0) + C_{\alpha,\beta,M,V_0}, \qquad  \forall k \geq 1, 0 < \rho_0 \leq r_0.
	\end{equation*}
	Thus, there exists $k_0 = k_0(r_0)$ sufficiently large such that
	\begin{equation*}
		A(r) \leq  C_{\alpha,\beta,M,V_0,r_0}, \qquad \forall 0 < r \leq \theta_0^{k_0}r_0.
	\end{equation*}
	\textbf{Case (ii).} Similarly to the previous case, by using H\"{o}lder, Poincar\'{e} and Sobolev inequalities we find that 
	\begin{equation*} 
		\bar{A}(\rho) \leq CE(\rho)^\frac{3\gamma_0}{2} \left(\rho^{2-\frac{2}{p_0}-\frac{3}{q_0}}  \|\nabla v\|_{L^{p_0}_tL^{q_0}_x (Q_\rho(z_0))}\right)^{3(1-\gamma_0)} \leq CE(\rho)^\frac{3\gamma_0}{2} M^{3(1-\gamma_0)},
	\end{equation*}
	where $0 < \rho \leq r_0$, $\gamma_0 = \alpha_0 + \beta_0$, $\alpha_0$ and $\beta_0$ should satisfy $\alpha_0 \geq 0, \beta_0 \geq 0$ and
	\begin{align*}
		1 &= \frac{3\alpha_0}{2} + \frac{\beta_0}{2} + \frac{3(1-\alpha_0-\beta_0)}{q^*_0}, \quad q^*_0 = \frac{3q_0}{3-q_0} \quad \text{for} \quad q_0 \in (1,3),
		\\
		\quad 1 &= \frac{3\beta_0}{2} + \frac{3(1-\alpha_0-\beta_0)}{p_0}, \quad 1 > \alpha_0 + \beta_0, \quad \frac{2}{3} > \alpha_0, \beta_0,
		\\
		\text{i.e., } \alpha_0 &= \frac{2}{3} \left(\frac{3}{p_0} + \frac{3}{q_0} - 3\right) \left(\frac{4}{p_0} + \frac{6}{q_0} - 5\right)^{-1}, \quad \beta_0 =  \left(\frac{2}{p_0} + \frac{4}{q_0} - \frac{10}{3}\right) \left(\frac{4}{p_0} + \frac{6}{q_0} - 5\right)^{-1}.
	\end{align*}	
	So that we force ourselves to the cases
	\begin{equation*}
		\frac{5}{2} < \frac{2}{p_0} + \frac{3}{q_0} < 3,\quad \frac{3}{p_0} + \frac{3}{q_0} \geq 3,\quad \frac{2}{p_0} + \frac{4}{q_0} \geq \frac{10}{3}, \quad q_0 \in (1,3)
	\end{equation*}
	and other cases can be recovered by using H\"{o}ler inequality as above. Note that we have $0 \leq 3\gamma_0 < 2$. Therefore, we can bound $A(r)$ as in  Case (i). 
	\\
	\textbf{Case (iii).} Let $w = \nabla \times v$ and $0 < \rho \leq r_0$, as in \cite[Lemma 3.6]{GKT2007}, we define 
	\begin{equation*}
		u(x,t) := \text{p.v.} \frac{1}{4\pi} \int_{\mathbb{R}^3} \nabla_x\left(\frac{1}{|x-y|}\right) \times w(y,t) \phi(y) \,dy \quad \text{and} \quad h := v - u,
	\end{equation*}
	where $\phi$ is a standard cut off function supported in $B_\rho(x_0)$ such that $\phi = 1$ in $B_{\frac{3\rho}{4}}(x_0)$. It can be seen that $\Delta h = 0$ in $B_{\frac{3\rho}{4}}(x_0)$. For $0 < 2r \leq \rho \leq r_0$ and $q_0 \in (1,\infty)$ we find that 
	\begin{align*}
		\|\nabla v\|^{q_0}_{L^{q_0}(B_r(x_0))} &\leq C_{q_0} \|\nabla u\|^{q_0}_{L^{q_0}(B_r(x_0))} + C_{q_0} \|\nabla h\|^{q_0}_{L^{q_0}(B_r(x_0))}
		\\
		&\leq C_{q_0} \|w\|^{q_0}_{L^{q_0}(B_{\rho}(x_0))} + C_{q_0} \left(\frac{r}{\rho}\right)^3 \|\nabla h\|^{q_0}_{L^{q_0}(B_{\rho}(x_0))}
		\\
		&\leq C_{q_0} \|w\|^{q_0}_{L^{q_0}(B_{\rho}(x_0))} + C_{q_0} \left(\frac{r}{\rho}\right)^3 \|\nabla v\|^{q_0}_{L^{q_0}(B_{\rho}(x_0))},
	\end{align*}
	where we used the Calder\'{o}n-Zygmund estimate and the mean value property of harmonic functions. Integrating in time yields for $ 1 < q_0 < 3$ (other cases can be recovered by using H\"{o}lder inequality)
	\begin{align*}
		H(r) &:= r^{2-\frac{2}{p_0}-\frac{3}{q_0}} \|\nabla v\|_{L^{p_0}_tL^{q_0}_x(Q_r(z_0))}  \qquad (0 < r \leq r_0)
		\\
		&\leq C_{p_0,q_0} \left(\frac{r}{\rho}\right)^{2-\frac{2}{p_0}-\frac{3}{q_0}} M + C_{p_0,q_0} \left(\frac{r}{\rho}\right)^{2-\frac{2}{p_0}} H(\rho) \qquad (0 < 2r \leq \rho \leq r_0)
		\\
		&= C_{p_0,q_0} \theta^{2-\frac{2}{p_0}-\frac{3}{q_0}} M + C_{p_0,q_0} \theta^{2-\frac{2}{p_0}} H(\rho) \qquad (r = \theta \rho, 2\theta \leq 1, 0 < \rho \leq r_0)
		\\
		&\leq C_{p_0,q_0} \theta^{2-\frac{2}{p_0}-\frac{3}{q_0}} M + C_{p_0,q_0} \theta^{\frac{3}{q_0}-1} H(\rho) \qquad (r = \theta \rho, 2\theta \leq 1, C_{p_0,q_0} > 1, 0 < \rho \leq r_0)
		\\
		&\leq C_{p_0,q_0} M + \frac{1}{2} H(\rho) \qquad (2C_{p_0,q_0} \theta^{\frac{3}{q_0}-1} \leq 1, 2\theta \leq 1, C_{p_0,q_0} > 1, 0 < \rho \leq r_0).
	\end{align*}
	By iteration, we can bound $H(r)$ by $C_{p_0,q_0} M$ for $0 < r \leq \theta^{k_0}r_0$ with some sufficiently large integer $k_0(r_0)$. It returns to Case (ii).
	\\
	\textbf{Case (iv).} For $0 < 2r \leq \rho \leq r_0$, $1 \leq q_0 < \frac{3}{2}$ (other cases where $\frac{3}{2}\leq q_0 \leq \infty$ can be recovered by using H\"{o}lder inequality) and $\frac{3}{2} \leq \bar{q}_0 := \frac{3q_0}{3-q_0} < 3$, using Sobolev inequality yields
	\begin{align*}
	\|w\|^{\bar{q}_0}_{L^{\bar{q}_0}(B_r(x_0))} &\leq C_{q_0} \left(\frac{r}{\rho}\right)^3 \|w\|^{\bar{q}_0}_{L^{\bar{q}_0}(B_{\rho}(x_0))} + C_{q_0} \|w-w_\rho\|^{\bar{q}_0}_{L^{\bar{q}_0}(B_{\rho}(x_0))}
	\\
	&\leq C_{q_0} \left(\frac{r}{\rho}\right)^3 \|w\|^{\bar{q}_0}_{L^{\bar{q}_0}(B_{\rho}(x_0))} + C_{q_0} \|\nabla w\|^{\bar{q}_0}_{L^{q_0}(B_{\rho}(x_0))},
	\end{align*}
	which implies that 
	\begin{align*}
		G(r) &:= r^{2-\frac{2}{p_0}-\frac{3}{\bar{q}_0}} \| w\|_{L^{p_0}_tL^{\bar{q}_0}_x(Q_r(z_0))}  \qquad (0 < r \leq r_0)
		\\
		&\leq C_{p_0,q_0} \theta^{\frac{3}{q_0}-2} G(\rho) + C_{p_0,q_0} \theta^{2-\frac{2}{p_0}-\frac{3}{q_0}} M \qquad (0 < \rho \leq r_0, r = \theta \rho, 2\theta \leq 1,C_{p_0,q_0} > 1)
		\\
		&\leq \frac{1}{2} G(\rho) + C_{p_0,q_0} M \qquad (2C_{p_0,q_0} \theta^{\frac{3}{q_0}-2} \leq 1, 2\theta \leq 1, 0 < \rho \leq r_0).
	\end{align*}
	Therefore, $G(r)$ is bounded by $C_{p_0,q_0} M$ for $0 < r \leq \theta^{k_0}r_0$ with some sufficiently large integer $k_0(r_0)$. We then return to Case (iii) since $\frac{2}{p_0} + \frac{3}{\bar{q}_0} = \frac{2}{p_0} + \frac{3}{q_0} - 1 \in [2,3)$. All above cases imply that 
	\begin{equation*}
		E(r) \leq  C_{\alpha,\alpha_0,\beta,\beta_0,M,V_0,p_0,q_0,r_0}, \qquad \forall 0 < r \leq r_0.
	\end{equation*}
	The rest of the proof is similar to that of Theorem \ref{theo1} by using \eqref{v3z0} instead of \eqref{v3v0}, where we only need to modify the definition of $(v_\delta,\pi_\delta)$ in the following way
	\begin{equation*} 
		v_\delta(x,t) := \delta v(\delta x + x_0,\delta^2 t + t_0)
		\quad 
		\text{and}
		\quad
		\pi_\delta(x,t) := \delta^2 \pi_{r_0,x_0}(\delta x + x_0,\delta^2 t + t_0).
	\end{equation*}
	
	$\bullet$\,\textnormal{(}The case of suitable weak solution\textnormal{)}
	The local energy inequality yields for $0 < 2r \leq r_0$
	\begin{align*}
		E(r) &\leq C\left(1 +  \frac{1}{4r^2} \|v\|^3_{L^3(Q_{2r}(z_0))}  + \frac{1}{4r^2} \|\pi|^\frac{3}{2}_{L^\frac{3}{2}(Q_{2r}(z_0))}\right) =: C(1 + A(2r) + B(2r)),
	\end{align*}
	where $E(r)$ is given as in the case of local energy solutions with $\pi$ instead of $\pi_{r_0,x_0}$. As in \cite[Lemma 3.4]{GKT2007} we have
	\begin{equation*}
		B(r) \leq C\left(\frac{r}{\rho}\right) B(\rho) + C \left(\frac{\rho}{r}\right)^2 \bar{A}(\rho) \qquad \text{for} \quad 0 < 2r \leq \rho \leq r_0,
	\end{equation*}
	which imples as in the previous case that
	\begin{align*}
		D(r) &:= A(r) + B(r) \qquad (0 < 2r \leq r_0)
		\\
		&\leq C\left(\frac{r}{\rho}\right) D(\rho)   + C\left(\frac{\rho}{r}\right)^2 \bar{A}(\rho) \qquad (0 < 8r \leq 4\rho \leq r_0)
		\\
		&\leq C\left(\frac{r}{\rho}\right) D(\rho)  + C\left(\frac{\rho}{r}\right)^2 E(\rho)^\frac{3(\alpha+\beta)}{2} M^{3(1-\alpha-\beta)}
		\\
		&\leq C\left(\frac{r}{2\rho}\right) D(2\rho) + C\left(\frac{\rho}{r}\right)^2 (C(1 + D(2\rho)))^\frac{3(\alpha+\beta)}{2} M^{3(1-\alpha-\beta)}. 
	\end{align*}
	The rest of the proof then can follows as in the case of local energy solutions by considering $D(r)$ as $A(r)$. We omit the details and end the proof.
	\end{proof}
	
	%
	\section{Proof of Theorem \ref{theo3}}\label{sec:theo3}
	%
	
	We will present the proof of Theorem \ref{theo3}.
	The idea of the proof comes from those of \cite[Theorems 2.1]{KWM2017,KWM2019}. However, for local energy solutions the condition is not involved the pressure due to the decomposition \eqref{pressure_decomposition}.
	
	\begin{proof}[Proof of Theorem \ref{theo3}]\quad $\bullet$\,\textnormal{(}The case of local energy solution\textnormal{)}\, We assume that the conclusion is not true, i.e., there exists a sequence of local energy solutions $(v^n,\pi^n)$ such that 
	\begin{equation*}
	\left\{
	\begin{aligned}
		r_0^{-2} \|v^n\|^3_{L^3(Q_{r_0}(z_0))}  &\leq M,
		\\
		r^{1-\frac{2}{p}-\frac{3}{q}}_0 \|v^n_3\|_{L^p_tL^q_x(Q_{r_0}(z_0))} &\to 0 \qquad \text{as}\quad n \to \infty \quad \text{for} \quad 1 \leq p,q \leq \infty,
		\\
		(\delta_0 r_0)^{-2} \|v^n\|^3_{L^3(Q_{\delta_0 r_0}(z_0))} &> \epsilon_0 \qquad \forall \delta_0 \in (0,1).
	\end{aligned}
	\right.
	\end{equation*}	
	As in the proof of Theorem \ref{theo2},
	\begin{equation*}
		E^n(r) \leq C(1 + M + V_0) \qquad \text{for} \quad r := \frac{r_0}{3},
	\end{equation*}
	where $E^n(r)$ is given as in the proof of Theorem \ref{theo2} with $(v^n,\pi^n_{r_0,x_0})$ instead of $(v,\pi_{r_0,x_0})$. Therefore, there exist a subsequence of $(v^n,\pi^n_{r_0,x_0})$ (still denoted by $(v^n,\pi^n_{r_0,x_0})$) and $(\bar{v},\bar{\pi})$ such that as $n \to \infty$
	\begin{equation*} 
		\left\{
		\begin{aligned}
			v^n  &\overset{\ast}{\rightharpoonup} \bar{v} \qquad \text{ in } L^\infty_tL^2_x(Q_{r}(z_0))),
			\\
			\nabla v^n  &\overset{}{\rightharpoonup} \nabla \bar{v} \quad\, \text{ in } L^2(Q_{r}(z_0)),
			\\
			\pi^n_{r_0,x_0}  &\overset{}{\rightharpoonup} \bar{\pi} \qquad \text{ in } L^\frac{3}{2}(Q_{r}(z_0)),
			\\
			v^n &\to \bar{v} \qquad \,\,\text{in } L^a(Q_{r}(z_0)) \qquad \text{for} \quad 1 \leq a < \frac{10}{3}.
		\end{aligned}
		\right.
	\end{equation*}
	In addition, we have $(\bar{v},\bar{\pi})$ is a suitable weak solution to \eqref{NS} in $Q_{r_0}(z_0)$. The convergence of $v^n_3$ implies that $\bar{v}_3 \equiv 0$. Therefore, $(\bar{v},\bar{\pi})$ is also a suitable weak solution to "the limiting system" and $\bar{v}$ is regular as well. Taking $n \to \infty$ yields
	\begin{equation*}
		(\delta_0 r)^{-2} \|v^n\|^3_{L^3(Q_{\delta_0 r}(z_0))} \to (\delta_0 r)^{-2} \|\bar{v}\|^3_{L^3(Q_{\delta_0 r}(z_0))} \geq \epsilon_0 \qquad \forall \delta_0 \in (0,1),
	\end{equation*}
	which is a contradiction with the regularity of $\bar{v}$ for $\delta_0$ small enough. Therefore, the proof is complete.

	$\bullet$\,\textnormal{(}The case of suitable weak solution\textnormal{)} The proof in this case is similar to that of for local energy solutions. We define $E^n(r)$ as in the proof of Theorem \ref{theo2} with $(v^n,\pi^n)$ instead of $(v,\pi_{r_0,x_0})$. Thus, 
	\begin{equation*}
		E^n(r) \leq C(1 + M) \qquad \text{for} \quad r := \frac{r_0}{2},
	\end{equation*}
	that completes the proof as in the previous case.
	\end{proof}
	
	%
	\section*{Acknowledgments} 
	%
	K. Kang was supported by NRF-2019R1A2C1084685. 
    D. D. Nguyen was supported by NRF-2015R1A5A1009350.


\end{document}